\theoremstyle{plain}
\newtheorem{lemma}{Lemma}[section]
\newtheorem{theorem}[lemma]{Theorem}
\newtheorem{claim}[lemma]{Claim}
\theoremstyle{remark}
\newtheorem{remark}[lemma]{Remark}
\newtheorem{question}[lemma]{Question}
\newcommand{\Z}{\mathbb{Z}}
\newcommand{\N}{\mathbb{N}}
\newcommand{\mcT}{\mathcal{T}}
\numberwithin{equation}{section}
\title{Graphings with few circulations}
\author{
Gábor Kun 
\and 
László Márton Tóth 
}
\date{}
\begin{document}

\maketitle
\setcounter{tocdepth}{2}
\begin{abstract} 
In 2021, motivated by graph limit theory Lovász extended most of the theory of flows to a measure theoretic setting \cite{lovasz2021flows}. Using this framework, the first author constructed $d$-regular treeings that are measurably bipartite, and have no nonzero measurable circulations, that is, flows without sources or sinks \cite{kun2021measurable}. In particular, these treeings do not admit a measurable perfect matching.

In this paper, we develop tools to build $d$-regular treeings where the space of circulations is exactly $ k$-dimensional for any positive integer $k$. As applications, we construct
\begin{enumerate}
    \item a treeing with a single balanced orientation, but no Schreier decoration;
    \item a treeing with a single Schreier decoration;
    \item and a treeing with a proper edge $d$-coloring, but no further perfect matchings.
\end{enumerate}
The first answers a question raised by Lovász, as this particular balanced orientation does not decompose as a linear combination of finite cycles and infinite paths. 
\end{abstract}

\tableofcontents

\newpage
\section{Introduction} \label{sec:intro}

Measurable combinatorics is primarily concerned with solving graph theoretic problems on Borel graphs, see, e.g., Marks' survey \cite{marks2022icm}. Probability measure preserving (pmp) graphs called \emph{graphings} play an important role in measured group theory and graph limit theory, see Gaboriau \cite{gaboriau2010icm} and Lov\'asz \cite{lovasz2012large}. 

Perfect matchings have attracted special attention due to their applications to equidecompositions, in particular, to Tarski's circle squaring problem, see M\'ath\'e's survey \cite{mathe2018icm}.
The main question in the bipartite measurable context is deciding when the measurable Hall condition implies the existence of a measurable perfect matching.

Laczkovich \cite{laczkovich1988closed} showed an example of a measurably bipartite $2$-regular graphing admitting no measurable perfect matchings. Bowen, the first author and Sabok proved a measurable Hall theorem for hyperfinite graphings, \cite{bowen2021perfect} showing that the only possible obstructions to the existence of a measurable perfect matching are two-ended subgraphing (i.e., of linear growth) without a measurable perfect matching.
In the non-hyperfinite case there are also notable positive results on the existence of measurable perfect matchings, see Lyons and Nazarov \cite{lyons2011perfect}, Cs\'oka and Lippner \cite{csoka2017invariant}.
The introduction of the first author \cite{kun2021measurable} gives a more detailed overview of recent results on measurable perfect matchings. 

In measurably bipartite graphings perfect matchings are closely related to \emph{circulations}, i.e., flows without sinks or sources. Every perfect macthing gives rise to a circulation defined by setting its value to be $d-1$ on the matching edges and $-1$ outside the matching. This connection has allowed the first author \cite{kun2021measurable} to 
construct for every $d \geq 3$ a $d$-regular \emph{treeing} (i.e., essentially acyclic graphing) that admits no measurable perfect matching,
refuting a question of Kechris and Marks 
\cite{kechris2016descriptive}. The construction uses a diagonalization argument to get a treeing without any nonzero circulation. In this paper, we adopt this strategy and highlight its flexibility in producing examples where a few circulations are permitted to survive.

\begin{theorem} \label{thm:finitely_many_circ}
   For every degree $d \geq 3$ and finite number $k \in \mathbb{N}$ there exists a $d$-regular treeing $\mcT$ such that the vector space of circulations in $L^2\big(E(\mcT), \eta \big)$ is exactly $k$ dimensional. 
\end{theorem}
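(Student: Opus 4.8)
The plan is to reduce to $k=1$ and then adapt the diagonalization of \cite{kun2021measurable}, seeded with a single rigid ``line'' that carries the surviving circulation. For the reduction: if $\mcT_1$ is a $d$-regular treeing whose space of circulations in $L^2(E(\mcT_1),\eta)$ is exactly one-dimensional, then the disjoint union of $k$ suitably rescaled copies of $\mcT_1$ is again a $d$-regular treeing; since $L^2$ of a disjoint union is the direct sum of the pieces, and a function on the edge set is a circulation precisely when its restriction to each piece is, the circulation space of the union is the direct sum of $k$ one-dimensional spaces. So it suffices to construct a single $\mcT_1$ as above.

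For the seed, fix an ergodic measure preserving $\Z$-action on an atomless standard probability space; its orbit graph is a $2$-regular line-treeing $\mathcal{L}$, and let $f_0 \in L^2$ be the ``unit flow'' along the lines, equal to $1$ in the direction of the action. Since the edge measure $\eta$ is finite, $f_0$ is bounded and hence square integrable; it has zero divergence because at every point of a line inflow equals outflow; and by ergodicity, any circulation supported on the edges of $\mathcal{L}$ is constant along each line, hence almost everywhere constant, i.e.\ a scalar multiple of $f_0$. The goal is to grow $\mathcal{L}$ to a $d$-regular treeing $\mcT_1 \supseteq \mathcal{L}$ in which the only circulations are the multiples of $f_0$. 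I would construct $\mcT_1$ as a measurable limit of finite graphs $G_n$ in the style of \cite{kun2021measurable}: each $G_n$ carries long path segments approximating the lines of $\mathcal{L}$, together with tree-like pieces of the appropriate degree attached to the segments (and to earlier such pieces) so as to bring every vertex up to degree $d$, and the pieces are glued by amalgamation maps chosen so that the local statistics converge to a $d$-regular treeing containing $\mathcal{L}$ as a sub-treeing.

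The heart of the argument is a \emph{relative} version of the diagonalization. Fix a countable set $h_1, h_2, \dots$ dense in $L^2\big(E,\eta\big) \ominus \R f_0$; note $\langle f_0, h_n \rangle = 0$ for all $n$, so $f_0$ is never a target. At stage $n$, supposing toward a contradiction that the limit treeing carried a circulation $g$ with $g \notin \R f_0$ and $\langle g, h_n\rangle$ bounded away from $0$, one observes first that $g$ cannot be supported on $\mathcal{L}$ (else, by the previous paragraph, $g \in \R f_0$), so $g$ is non-negligible on some edge outside $\mathcal{L}$; then, as in \cite{kun2021measurable}, flow conservation forces a non-negligible amount of flow along a trajectory that wanders through the tree-like part, meeting many edges and thereby inflating $\|g\|_2$. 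One therefore chooses $G_{n+1}$, enlarging only the tree-like pieces and never touching the distinguished path segments, so as to forbid the finite configurations that such a $g$ would require. Carrying this out for all $n$ leaves exactly the circulations in $\R f_0$, proving the claim.

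The main difficulty is precisely this relativization. In \cite{kun2021measurable} the diagonalization may annihilate \emph{every} nonzero circulation, because a flow that escapes to infinity is always costly in a treeing; here one prescribed flow must be kept alive, so the ``costly trajectory'' estimate has to be run in a way that is blind to trajectories lying inside $\mathcal{L}$ while still detecting every trajectory that leaves $\mathcal{L}$, even partially. Designing amalgamation maps that simultaneously preserve the path segments, destroy all transverse circulations, and keep the local statistics converging to a genuine $d$-regular treeing is the delicate point; by comparison, checking that $f_0$ is an $L^2$ circulation, that the limit is a $d$-regular treeing, and that ergodicity pins the line-supported circulations down to $\R f_0$, is routine.
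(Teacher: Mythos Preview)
Your reduction to $k=1$ by disjoint union is correct and is a simplification the paper does not make (it carries all $k$ circulations through the construction simultaneously). The gap is in the $k=1$ case: what you call ``the delicate point'' is the entire content of the theorem, and you do not carry it out. Concretely, your dense family $h_1,h_2,\ldots$ is taken in $L^2(E,\eta)\ominus\R f_0$ for the edge space of the limit object $\mcT_1$, which does not yet exist when you run the diagonalization; the paper avoids this circularity by choosing, at each finite stage $G_n$, a finite $\varepsilon$-net $\mathcal{B}_n$ in the orthogonal complement of the prescribed circulations inside $\ell^2(E(G_n))$. More seriously, you provide no mechanism ensuring the prescribed circulation survives. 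The $\|g\|_2$-inflation heuristic from \cite{kun2021measurable} is designed to kill \emph{every} circulation; once a line $\mathcal{L}$ is protected, flow entering the tree-like part can re-enter $\mathcal{L}$, and the argument no longer applies as stated. Separating the surviving circulation from the ones to be killed is exactly the new work.

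The paper's route is structurally different. It builds a $d$-proper inverse system of finite $d$-regular graphs $G_n$, each carrying $k$ rational-valued circulations $\alpha_i^n$. For every $\beta\in\mathcal{B}_n$ it introduces a coordinate, takes a large box in the resulting cover, and performs boundary surgery. The key new ingredient is the Deficiency Lemma (Lemma~\ref{lemma:deficiency}): because each $\beta$ is orthogonal to the $\alpha_i^n$, the total $\alpha_i$-deficiency on a boundary box with a single tight coordinate vanishes, so the $\alpha_i^{n+1}$ can be extended through the added gadgets (Lemma~\ref{lemma:extension}). The contradiction for a hypothetical extra circulation $\gamma\perp\langle\alpha_i\rangle$ is obtained not by an $L^2$-blowup but by lifting the coordinate function $p^m_{\beta}$ (for a $\beta\in\mathcal{B}_m$ close to a finite approximation of $\gamma$) to a bounded potential $h$ on $\mcT$ with $\langle\gamma,\nabla h\rangle$ close to $1$, contradicting Lemma~\ref{lem:potential_vs_circulation}. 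None of these steps has a counterpart in your sketch.
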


Here $\eta$ denotes the edge measure of $\mcT$, see Section~\ref{sec:prelim}. Using the same argument, we can also build treeings that carry certain combinatorial structures, such as a balanced orientation, a Schreier decoration, or a proper $d$-edge-coloring, but admit only one of them. The proofs of the three theorems treating these cases are slight adjustments of the proof of Theorem~\ref{thm:finitely_many_circ}.

An orientation of a $2d$-regular graph is called \emph{balanced} if all in- and outdegrees are equal to $d$. In a \emph{Schreier decoration}, in addition to a balanced orientation, we also have a labeling of the oriented edges with $d$ labels such that for each label all in- and outdegrees are 1. We can view a balanced orientation as a circulation with $\pm1$ values, and a Shreier decoration as a specific type of decomposition of a balanced orientation into the sum of $d$ circulations. Schreier decorations encode actions of the free group $\mathbb{F}_d$. Theorems for balanced orientations and perfect matchings go usually hand in hand. For results on finding measurable balanced orientations and Schreier decorations on graphings see the second author \cite{toth2021invariant}, Bencs, Hru\v{s}kov\'a, and the second author\cite{bencs2024factor_euclidean, bencs2024factor_nonamen}, Thornton \cite{thornton2022orienting}, and Bowen, the first author, and Sabok  \cite{bowen2021perfect}.

First, we allow a balanced orientation, but nothing else.
\begin{theorem} \label{thm:balanced_orientation}
For every $d\geq2$ there exists a $2d$-regular treeing that has a measurable balanced orientation, but admits no nonzero measurable circulations in $L^2\big(E(\mcT), \eta\big)$ besides constant multiples of the balanced orientation. In particular, it has no measurable Schreier decoration, i.e., it is not generated by an essentially free action of the free group $\mathbb{F}_d$. 
\end{theorem}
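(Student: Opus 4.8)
The plan is to adapt the diagonalization construction of Theorem~\ref{thm:finitely_many_circ} to keep exactly one circulation alive, and to arrange that this surviving circulation takes values in $\{+1,-1\}$ so that it encodes a balanced orientation. The treeing $\mcT$ will be built as an increasing union (or inverse limit) of finite approximations, realized concretely as a $2d$-regular treeing together with a fixed measurable balanced orientation $\omega$ baked in from the start; the orientation $\omega$ will be the ``protected'' circulation. The main work is then to kill every other circulation while not disturbing $\omega$.

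First I would set up the space of circulations as a closed subspace of $L^2(E(\mcT),\eta)$ and recall that a circulation is an antisymmetric edge function whose signed sum around every vertex vanishes. The diagonalization proceeds by enumerating a countable dense family of candidate ``obstructions'' — more precisely, a dense sequence in the orthogonal complement of $\mathbb{R}\omega$ inside the (potential) circulation space — and, at stage $n$, modifying the treeing on a positive-measure piece so as to destroy all circulations that are close to the $n$-th candidate, while preserving $\omega$ as a circulation. Concretely, at each stage one attaches a finite gadget: a piece of a finite tree (to keep the object a treeing) along which one forces a local linear relation that any circulation must satisfy. The gadget must be compatible with a balanced orientation, i.e.\ the half-edges being glued in must be orientable so that the global $\omega$ extends; since we are free to design the gadget, we choose one that is $2d$-regular and carries a balanced orientation with a prescribed pattern on its boundary half-edges. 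The delicate point, exactly as in \cite{kun2021measurable}, is to do this measurably and equivariantly on a pmp groupoid: one works with the Borel structure of the (in)finite trees and performs the surgery along a Borel transversal, controlling measures by a Borel–Cantelli / summability argument so that the limit treeing is still $2d$-regular, pmp, and essentially acyclic.

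The heart of the argument, and the main obstacle, is the local gadget lemma: one needs a finite $2d$-regular graph fragment $H$ with a specified set of boundary half-edges, a balanced orientation of $H$ extending any prescribed boundary orientation, such that \emph{every} circulation on $H$ vanishing on the boundary (other than the contribution of the orientation itself) is forced to be $0$, yet $H$ admits no nonzero circulation that is orthogonal to the orientation. Equivalently, the cycle space of $H$ relative to its boundary must be ``spanned by the orientation'' in the appropriate sense. I expect this to be handled by taking $H$ to be a carefully chosen finite graph whose cycle space modulo the orientation vector is trivial — for instance, built from triangles or short even cycles glued in a tree-like pattern so that forcing a circulation to be $\pm1$-valued on one edge propagates rigidly through all of $H$. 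Getting the numerology to work for all $d\geq 2$ simultaneously (so that the fragment is $2d$-regular and the boundary degrees match the rest of the treeing) is the fiddly part; once the fragment exists, the diagonalization and measure bookkeeping are essentially identical to the proof of Theorem~\ref{thm:finitely_many_circ} with $k=1$, together with the observation that a $\{+1,-1\}$-valued circulation on a $2d$-regular graph is exactly a balanced orientation.

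Finally, I would note that the extra conclusions follow for free: since the only measurable circulations are the constant multiples of $\omega$, in particular there is no circulation with values in $\{+1,-1\}$ other than $\pm\omega$, and a Schreier decoration would decompose a balanced orientation into $d\geq 2$ circulations with disjoint supports — impossible when the circulation space is one-dimensional. Hence $\mcT$ carries no measurable Schreier decoration, and therefore is not generated by an essentially free $\mathbb{F}_d$-action, giving the ``in particular'' clause.
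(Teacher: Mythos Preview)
Your high-level plan---run the $k=1$ case of Theorem~\ref{thm:finitely_many_circ} while forcing the surviving circulation to be $\{\pm1\}$-valued---is exactly the paper's strategy, and your derivation of the ``in particular'' clause is fine. But the mechanism you sketch for the construction is off in two places, and as stated would not produce a proof.

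First, the finite approximants $G_n$ are \emph{not} pieces of trees; they are connected non-bipartite $2d$-regular graphs with many cycles. Acyclicity only emerges in the inverse limit, and indeed the construction relies on the $G_n$ having a rich supply of circulations (the $\mathcal{B}_n$ live in the orthogonal complement of $\alpha^n$ inside the cycle space of $G_n$). So your phrase ``a piece of a finite tree (to keep the object a treeing)'' and the suggestion of doing surgery ``along a Borel transversal'' on an already-existing treeing describe a different, and here unworkable, picture. The surgeries happen entirely at the finite-graph level; the Borel/measure bookkeeping is handled automatically by the inverse-limit machinery (Lemma~\ref{basic}).

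Second, your ``local gadget lemma'' is not the right ingredient and is not how circulations are killed. Circulations orthogonal to $\alpha$ die not because some finite fragment $H$ has trivial relative cycle space, but because each $\beta\in\mathcal{B}_n$ is turned into a \emph{coordinate} on $V_0(G_{n+1})$, and that coordinate becomes a bounded potential whose gradient approximates $\beta$; the contradiction then comes from Lemma~\ref{lem:potential_vs_circulation}. The actual gadget requirement is much milder than what you state: one only needs that $\texttt{Gad}(m)$ carries a balanced orientation (with one incoming and one outgoing half-edge at its two degree-one endpoints), so that replacing an edge by a gadget preserves the balanced orientation. The Deficiency Lemma then guarantees that, on boundary boxes with a single tight coordinate, missing in- and out-degrees pair up, which is exactly what lets you place the gadgets while keeping $\alpha^{n+1}$ a balanced orientation. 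Your proposed criterion---that every boundary-vanishing circulation on $H$ be zero---is neither needed nor achievable with the gadgets used.
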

This answers a question raised in a paper of Lovász [Remark 4.19,\cite{lovasz2021flows}], namely, if circulations can always be decomposed into a nonnegative linear combination of finite directed cycles and directed two-way infinite paths. The balanced orientation of the treeing in Theorem~\ref{thm:balanced_orientation} admits no such decomposition. 

Next, we allow one single Schreier decoration, but nothing else.

\begin{theorem} \label{thm:Schreier_graphing}
For every $d\geq2$ there exists a $2d$-regular treeing that is a Schreier graphing of $\mathbb{F}_d$, but admits no nonzero measurable circulations in $L^2\big(E(\mcT), \eta \big)$ besides linear combinations of the ones corresponding to the generators of $\mathbb{F}_d$.
\end{theorem}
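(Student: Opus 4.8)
The plan is to carry a Schreier decoration through the diagonalization behind Theorem~\ref{thm:finitely_many_circ}, run with $k=d$, so that the surviving $d$-dimensional space of circulations is precisely the span of the generator circulations; below I indicate the changes to that proof. Start from $\mcT_0$, the Schreier graphing of an essentially free pmp action of $\mathbb{F}_d$ (for instance the Bernoulli action). This is a $2d$-regular treeing with a measurable Schreier decoration $c_0$ — an orientation of every edge together with a label in $\{1,\dots,d\}$ so that at each vertex every label occurs exactly once in and once out — and for each generator $i$ the function $\phi_i^{(0)}\in L^2\big(E(\mcT_0),\eta_0\big)$ that takes value $+1$ on edges labeled $i$ (in their orientation) and $0$ elsewhere is a circulation; these $d$ circulations have pairwise disjoint supports, hence are linearly independent. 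Exactly as in the proof of Theorem~\ref{thm:finitely_many_circ}, build $\mcT$ as the limit of a sequence $\mcT_0,\mcT_1,\dots$ in which $\mcT_{n+1}$ arises from $\mcT_n$ by the same local surgery (replacing suitably chosen pieces by finite gadgets), subject to the extra constraint that each gadget be a finite piece of a Schreier graph of $\mathbb{F}_d$ whose boundary orientations and labels agree with those of the region it replaces. Then $c_n$ extends to a decoration $c_{n+1}$ of $\mcT_{n+1}$, and these decorations converge — in the same sense in which the protected circulations converge in the proof of Theorem~\ref{thm:finitely_many_circ} — to a measurable Schreier decoration $c$ of $\mcT$. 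In particular $\mcT$ is a Schreier graphing of $\mathbb{F}_d$, and since it is a treeing the associated $\mathbb{F}_d$-action is essentially free. Let $\phi_1,\dots,\phi_d$ be the generator circulations of $c$.

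It remains to check that the space of circulations in $L^2\big(E(\mcT),\eta\big)$ equals $\mathrm{span}(\phi_1,\dots,\phi_d)$. The inclusion $\supseteq$ is clear. For $\subseteq$, fix a countable family $\{f_m\}$ dense in $L^2\big(E(\mcT),\eta\big)$; at stage $n$, if $f_n$ is not within the prescribed tolerance of $\mathrm{span}(\phi_1,\dots,\phi_d)$, choose the surgery producing $\mcT_{n+1}$ so that the divergence of $f_n$ has norm bounded away from $0$ on the newly inserted gadget, exactly as in the proof of Theorem~\ref{thm:finitely_many_circ}. Passing to the limit, a circulation of $\mcT$ is within every prescribed tolerance of $\mathrm{span}(\phi_1,\dots,\phi_d)$, hence — the latter being finite-dimensional and therefore closed — lies in it; and this space is exactly $d$-dimensional since the $\phi_i$ are linearly independent.

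The one ingredient that is a genuine \emph{adjustment} of the proof of Theorem~\ref{thm:finitely_many_circ}, rather than a verbatim reuse, is the gadget construction: one needs finite pieces of $\mathbb{F}_d$-Schreier graphs that simultaneously (i) carry a prescribed boundary orientation-and-label pattern, so they splice into $\mcT_n$ while keeping the decoration coherent; (ii) transmit the $d$ coordinate circulations with the correct $L^2$ mass; and (iii) are rigid, in the sense that any function on the gadget with small divergence is, up to small error, a linear combination of the restrictions of the coordinate circulations — i.e. the labeled counterpart of the gadget-rigidity statement already established for Theorem~\ref{thm:finitely_many_circ} when $k=d$. Everything else — that the surgeries can be arranged so that $\mcT_n\to\mcT$ with $\mcT$ a $2d$-regular pmp treeing, and that $\phi_1,\dots,\phi_d$ survive in $L^2$ with nonzero norm — is inherited unchanged. (The stricter Theorem~\ref{thm:balanced_orientation} follows along the same lines, with a single protected circulation taking $\pm1$ values and gadgets that are finite pieces of a balanced-oriented $2d$-regular tree.)
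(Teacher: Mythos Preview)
Your proposal diverges from the paper's argument at the very first step and, as written, does not go through. The construction behind Theorem~\ref{thm:finitely_many_circ} is \emph{not} a sequence of treeings $\mcT_0,\mcT_1,\dots$ modified by local surgeries; it is an inverse limit of \emph{finite} $d$-regular graphs $G_1,G_2,\dots$, where $G_{n+1}$ is built as a large box inside a topological cover of $G_n$ (coordinates indexed by a finite $\tfrac{\varepsilon}{2}$-net $\mathcal{B}_n$ of the orthogonal complement of the protected circulations), followed by a boundary surgery using the gadgets $\texttt{Gad}(m)$. The paper's proof of Theorem~\ref{thm:Schreier_graphing} simply takes $k=d$, starts from a finite $2d$-regular non-bipartite graph $G_1$ equipped with a Schreier decoration $\alpha^1_1,\dots,\alpha^1_d$, and observes that the gadget $\texttt{Gad}(m)$ admits a Schreier decoration with any prescribed incoming/outgoing label at its two degree-one vertices $u,v$; hence every surgery in Claims~\ref{lemma:surgery} and~\ref{lemma:surgery2} and every cross-surgery preserves the Schreier decoration, and the $\alpha^n_i$ converge to a Schreier decoration of the limit $\mcT$. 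No other change to Section~\ref{sec:finitely_many_circ} is needed.

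Your alternative scheme has two genuine gaps. First, you fix a countable dense family $\{f_m\}$ in $L^2\big(E(\mcT),\eta\big)$ before $\mcT$ is constructed; this is circular, and there is no common ambient $L^2$ space across your $\mcT_n$'s in which such a family could live. The paper avoids this entirely: it never enumerates circulations on the limit, but instead takes an arbitrary $\gamma\perp\langle\alpha_i\rangle$ on $\mcT$, pushes it down to a conditional expectation $\gamma_n$ on some finite $G_n$, and catches $\gamma_n$ (up to $\varepsilon$) in the finite net $\mathcal{B}_m$ for some $m\geq n$. Second, your elimination mechanism (``make the divergence of $f_n$ bounded away from $0$ on the inserted gadget'') is not what kills circulations in the paper; the mechanism is that the coordinate function $p^m_{\beta}$ extends to a bounded potential $h_\beta$ on $G_{m+1}$ with $\nabla h_\beta\approx\beta$, so any limit circulation close to $\beta$ would have large inner product with a gradient, contradicting Lemma~\ref{lem:potential_vs_circulation}. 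Forcing a single $f_n$ to have nonzero divergence at stage $n+1$ says nothing about circulations on the eventual limit.
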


Similarly to Schreier decorations, a proper $d$-edge-coloring encodes an action of the group $(\mathbb{Z}/2\mathbb{Z})^{*d}$, the free product of $d$ two-element groups. It is the union of $d$ pairwise disjoint perfect matchings. The next theorem allows such a proper $d$-edge-coloring, but no further perfect matchings.

\begin{theorem} \label{thm:perfect_matching}
For every $d\geq3$ there exists an essentially free pmp action of the free product $(\Z / 2\Z)^{*d}$ such that its Schreier graphing has exactly $d$ perfect matchings, the ones corresponding to the generators.
\end{theorem}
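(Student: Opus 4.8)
The plan is to reduce the statement to a colored refinement of the construction behind Theorem~\ref{thm:finitely_many_circ}, using the dictionary between perfect matchings and circulations. As in the proof of Theorem~\ref{thm:finitely_many_circ} (and in \cite{kun2021measurable}), in a measurably bipartite $d$-regular treeing $\mcT$ a perfect matching $M$ corresponds to the circulation $f_M\in L^2\big(E(\mcT),\eta\big)$ that equals $d-1$ on $M$ and $-1$ off $M$, and conversely any circulation of this shape comes from a perfect matching. Suppose $\mcT$ is the Schreier graphing of an essentially free pmp action of $(\Z/2\Z)^{*d}$, with the $d$ color classes (the generator matchings) $M_1,\dots,M_d$ and associated circulations $f_1,\dots,f_d$. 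Since every edge lies in exactly one class, $\sum_i f_i\equiv 0$, so the $f_i$ span a subspace $V\le L^2\big(E(\mcT),\eta\big)$ with $\dim V=d-1$. The key combinatorial observation is that $V$ contains no perfect-matching circulation besides $f_1,\dots,f_d$: writing $f=\sum_i c_i f_i$, on each class $M_j$ the function $f$ is the constant $c_j d-\sum_i c_i$, so if $f$ is $\{-1,d-1\}$-valued and has zero excess at every vertex, then exactly one class — say $M_{j_0}$ — carries the value $d-1$ and the rest carry $-1$, i.e.\ $f=f_{j_0}$. Hence it suffices to produce an essentially free pmp action of $(\Z/2\Z)^{*d}$ whose Schreier graphing $\mcT$ is a treeing with circulation space \emph{exactly} $V$; then the circulation of every perfect matching of $\mcT$ (bounded, so in $L^2$) lies in $V$ and therefore equals one of $f_1,\dots,f_d$.

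To construct such a $\mcT$, I would run the diagonalization argument of Theorem~\ref{thm:finitely_many_circ} inside the category of properly $d$-edge-colored graphs — equivalently, graphs carrying an action of $(\Z/2\Z)^{*d}$ — starting from the $d$-regular Bass--Serre tree of $(\Z/2\Z)^{*d}$ with its canonical coloring by the generators. Every circulation-killing surgery (the gadget glued in at a given scale) must itself be properly $d$-edge-colored and must be attached along edges in a color-respecting fashion, so that at each finite stage the object remains the Schreier graph of an action of $(\Z/2\Z)^{*d}$ on a finite set, the $d$ classes $M_1,\dots,M_d$ persist, and their circulations $f_i$ are undisturbed. Carrying out the same enumeration and orthogonalization as for Theorem~\ref{thm:finitely_many_circ} — now arranging that the only circulations not pushed to $L^2$-norm zero are those lying in $V$ — should yield in the limit a treeing $\mcT$ with circulation space precisely $V$, which by the previous paragraph has exactly the $d$ perfect matchings $M_1,\dots,M_d$.

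The main obstacle, I expect, is executing the circulation-killing surgeries \emph{compatibly with the $d$-edge-coloring} while keeping the limiting $(\Z/2\Z)^{*d}$-action essentially free: one must design colored gadgets that are simultaneously (i) flexible enough to annihilate an arbitrary prescribed circulation direction orthogonal to $V$, (ii) glueable with measures that keep the construction pmp, and (iii) harmless from the point of view of freeness, introducing no positive-measure fixed set for any nontrivial element — the standard source of trouble when passing to the limit of finite approximations. A secondary point needing care is the exhaustiveness of the diagonalization: one needs the same $L^2$-completeness input used in the proof of Theorem~\ref{thm:finitely_many_circ} to ensure that handling a countable dense family of test directions leaves no circulation outside $V$ alive, so that the circulation space is \emph{exactly} $(d-1)$-dimensional rather than merely containing $V$.
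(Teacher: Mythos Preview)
Your plan is essentially the paper's: build a measurably bipartite $d$-regular treeing as an inverse limit of finite properly $d$-edge-colored bipartite graphs, run the diagonalization of Theorem~\ref{thm:finitely_many_circ} so that the only surviving circulations are those coming from the $d$ color classes, and then observe (as you do, and slightly more explicitly than the paper) that no other perfect-matching circulation lives in their span. Two small corrections. First, the construction does not start from the Bass--Serre tree but from a finite connected $d$-regular \emph{bipartite} graph with a proper $d$-edge-coloring; bipartiteness replaces the non-bipartiteness assumption used elsewhere, and gadgets are always inserted between vertices in opposite classes, so odd-length walks in $G_n$ are available. Second, essential freeness is not an independent obstacle: once the circulation space is finite-dimensional the limit is a treeing, and on an acyclic Schreier graph no nontrivial reduced word of $(\Z/2\Z)^{*d}$ can fix a vertex. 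The one genuinely new ingredient you are missing is how the colored surgery goes through: from the Deficiency Lemma applied to each $\alpha^{n+1}_i$ one gets a linear system forcing the color-wise black/white excesses $z_i=w_i-b_i$ to all be equal, which is exactly what permits patching the boundary with $|z|$ new vertices of a single bipartition class and then replacing the added edges by colored copies of $\texttt{Gad}(m)$.
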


\paragraph{Organization of the paper.} In Section~\ref{sec:prelim} we introduce the core definitions and results we use. In Section~\ref{sec:finitely_many_circ} we prove our most general result, Theorem~\ref{thm:finitely_many_circ}. In Section~\ref{sec:applications} we show how one needs to modify the construction to get Theorems~\ref{thm:balanced_orientation}, \ref{thm:Schreier_graphing}, and \ref{thm:perfect_matching}. In Section~\ref{sec:further_directions} we discuss a few possible directions for future research.

\paragraph{Acknowledgements.} The authors are grateful to Miklós Abért,  Lukasz Grabowski, László Lovász, and Zoltán Vidnyánszky for helpful discussions.

Gábor Kun has been supported by the Hungarian Academy of Sciences Momentum Grant no. 2022-58 and ERC Advanced Grant ERMiD.
László Márton Tóth has been supported by the National Research, Development and Innovation Fund -- grants number KKP-139502 and STARTING 150723. 

\section{Preliminaries} \label{sec:prelim}

\subsection{Graphings and circulations}
Recall that a \emph{graphing} $\mathcal{G}$ is a graph whose vertex set is a standard Borel probability measure space $(X, \lambda)$, and whose edge set is the countable union of graphs of pmp partial functions on $X$. A \emph{treeing} is a graphing that is essentially acyclic. In this paper we only work with $d$-regular graphings for a finite $d>0$. We consider the \emph{edge measure} of $\mathcal{G}$ induced by $\lambda$ as follows: 

$$\eta(S) = \frac{1}{d}\int_X \deg_S(x) \ d\lambda(x)$$ 
for each $S \subseteq E(\mathcal{G})$ Borel, where $\deg_S(x)$ denotes the degree of $x$ using edges from $S$.  Note that 
$\int_X \deg(x) \ d\lambda(x)=d$, hence our choice of normalization makes $\eta$ a probability measure. This is how the edge measure is normalized in Markov spaces in the works motivating our paper \cite{lovasz2021flows, kun2021measurable}. It is a slight deviation from the convention for graphings, where the edge measure is usually normalized differently. See, e.g., \cite{lovasz2012large} for a detailed introduction to graphings. 

Since we work in the measurable context we might understand properties of graphings upto a nullset: the degrees are countable, hence the union of the connected components intersecting a nullset is still a nullset.

For a graph $G$ we denote its edge set by $E(G)$. We only work with undirected graphs in this paper, but by $E(G)$ we mean a set of directed edges, i.e., for every pair of adjacent vertices $u, v$ we have $(u,v),(v,u) \in E(G)$.
A \emph{circulation} on a graph $G$ is an antisymmetric function $f: E(G) \to \mathbb{R}$ that satisfies the flow condition at each vertex, i.e., for every $x \in V(G)$ \[\sum_{y: (x,y) \in E(G)} f(x,y) = \sum_{y: (y,x) \in E(G)} f(y,x)=0.\]
Note that the equality of the two sums follows from the antisymmetry.

A \emph{potential} is a function $h: V(G) \to \mathbb{R}$. We denote its gradient by $\nabla h : E(G)\to \mathbb{R}$, i.e., $\nabla h(x,y) = h(y)-h(x)$. Circulations are characterized by the fact that they are orthogonal to all gradients of potentials. This is due to Lovász in the context of measure valued flows on measure spaces \cite{lovasz2021flows}. Here we state it for graphings.

\begin{lemma}\label{lem:potential_vs_circulation}
    The function $f:E(\mathcal{G}) \to \mathbb{R}$ is a circulation if and only if
    \[\int_{E(\mathcal{G})} f(x,y)\nabla h(x,y) \ d \eta(x,y) =0\]
     holds for all bounded potentials $h$.
\end{lemma}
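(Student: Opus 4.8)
The plan is to establish a discrete ``integration by parts'' identity expressing the edge integral $\int_{E(\mathcal{G})} f\,\nabla h\,d\eta$ in terms of the \emph{divergence} of $f$ at vertices, $(\operatorname{div} f)(x):=\sum_{y:(x,y)\in E(\mathcal{G})} f(x,y)$, and then to read off both implications. Two elementary properties of $\eta$ are needed. First, its disintegration over $\lambda$: for any $\phi\in L^1(E(\mathcal{G}),\eta)$,
\[\int_{E(\mathcal{G})}\phi\,d\eta=\frac1d\int_X\ \sum_{y:(x,y)\in E(\mathcal{G})}\phi(x,y)\,d\lambda(x),\]
which is immediate from the definition of $\eta$ on indicator sets together with monotone/dominated convergence. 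Second, the edge-reversal involution $\iota(x,y)=(y,x)$ preserves $\eta$; this is precisely where the measure-preserving hypothesis on $\mathcal{G}$ enters (it is the mass transport principle for pmp graphings, equivalently a consequence of $\mathcal{G}$ being a countable union of graphs of pmp partial maps, so that $\int_X\deg^{\mathrm{out}}_S\,d\lambda=\int_X\deg^{\mathrm{in}}_S\,d\lambda$ for Borel $S\subseteq E(\mathcal{G})$). Since $\eta$ is a probability measure, a circulation $f$ lies in $L^1(\eta)$; hence the defining sum for $\operatorname{div} f$ converges absolutely for $\lambda$-a.e.\ $x$ and $\operatorname{div} f\in L^1(X,\lambda)$, and for bounded $h$ the integrand $f\,\nabla h$ lies in $L^1(\eta)$. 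I will only test against such antisymmetric, integrable $f$.

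The core computation is as follows. For an antisymmetric $f\in L^1(E(\mathcal{G}),\eta)$ and a bounded potential $h$, split $\nabla h(x,y)=h(y)-h(x)$. Applying $\iota$-invariance of $\eta$ to the first piece and then antisymmetry of $f$ gives $\int_{E(\mathcal{G})} f(x,y)h(y)\,d\eta=\int_{E(\mathcal{G})} f(y,x)h(x)\,d\eta=-\int_{E(\mathcal{G})} f(x,y)h(x)\,d\eta$, whence
\[\int_{E(\mathcal{G})} f(x,y)\,\nabla h(x,y)\,d\eta(x,y)=-2\int_{E(\mathcal{G})} f(x,y)h(x)\,d\eta(x,y)=-\frac2d\int_X h(x)\,(\operatorname{div} f)(x)\,d\lambda(x),\]
the last equality being the disintegration formula. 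From this identity both directions drop out. If $f$ is a circulation then $\operatorname{div} f=0$ $\lambda$-a.e., so the right-hand side vanishes for every bounded $h$. Conversely, if the left-hand side vanishes for all bounded potentials, plug in $h=\mathbbm{1}_A$ for an arbitrary Borel $A\subseteq X$ to get $\int_A(\operatorname{div} f)\,d\lambda=0$ for all $A$, hence $\operatorname{div} f=0$ $\lambda$-a.e., which is exactly the flow condition. (On the right-hand side $f$ is taken antisymmetric as well, as is built into the notion of circulation; without this the statement fails — a nonzero constant function on directed edges is orthogonal to all gradients but is not a circulation.)

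I do not anticipate a genuine obstacle: the entire substance sits in the $\iota$-invariance of $\eta$ used in the first paragraph, and this is the only point where the pmp assumption is invoked, so it deserves a careful sentence; the rest is formal manipulation plus routine integrability bookkeeping. The single subtlety worth flagging explicitly in the write-up is the standing restriction to antisymmetric (and $L^1$, a fortiori $L^2$) functions $f$, as indicated above.
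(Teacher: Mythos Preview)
The paper does not supply its own proof of this lemma; it simply attributes the result to Lov\'asz \cite{lovasz2021flows} and states it for graphings. So there is nothing to compare against, and your argument stands on its own.

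Your proof is the standard and correct one: the divergence--gradient adjointness identity (discrete integration by parts), resting on the $\iota$-invariance of $\eta$, which is exactly the mass-transport principle for pmp graphings. Both directions then follow immediately, as you say. One wording issue to fix: the sentence ``Since $\eta$ is a probability measure, a circulation $f$ lies in $L^1(\eta)$'' is not justified---nothing in the pointwise definition of a circulation forces integrability, and $\eta$ being a probability measure is irrelevant to that. What you actually need (and what you in fact assume a line later) is simply the hypothesis $f\in L^1(\eta)$, so that $f\,\nabla h\in L^1(\eta)$ for bounded $h$ and the identity makes sense; this is how the lemma is applied later in the paper, where $\gamma\in L^2\subseteq L^1$. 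State that as a standing assumption rather than a consequence. Also note that your converse yields $\operatorname{div} f=0$ only $\lambda$-a.e., not literally everywhere; this is fine in the measurable context the paper works in (as it explicitly says properties are understood up to null sets), but is worth saying.
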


The circulations we permit to survive in the limit will be rational valued. It will be convenient to adopt the following notations:

\begin{itemize}
    \item $[K]= \{1, \ldots , K\};$
    \item $[K]_i=\left\{\frac{p}{q} ~\Big|~ p,q \in \Z, q \leq i, 
    0 < p/q \leq K\right\}$.
\end{itemize}

\subsection{Inverse limits}

Let $\{ G_n \}_{n=1}^{\infty}$ be a sequence of finite graphs and $f_n:V(G_{n+1}) \rightarrow V(G_n)$ be a sequence of graph homomorphisms. We define the inverse limit of the sequence $\{ G_n ,f_n\}_{n=1}^{\infty}$ to be the graph $G$ with vertex set 
	\[ V(\mathcal{G})=\big\{ (x_n)_{n=1}^{\infty}:\forall n \text{ } x_n \in V(G_n), f_n (x_{n+1})=x_n  \big\}\]
	and edge set
	\[E(\mathcal{G})=\{ (x, y): x,y \in V(G), \forall n  \ (x_n, y_n) \in E(G_n) \}.\] 
	We endow $V(\mathcal{G})$ with the topology inherited from the product topology of the discrete topologies on $V(G_n)$.
	
	By slightly abusing the notation if $f:V(G') \to V(G)$ is a mapping and $(u,v) \in E(G)$, we will denote by $f^{-1}(u,v)$ the set \[\{(u',v') \in E(G'): f(u')=u \text{ and } f(v')=v\}.\]
	
	A sequence $\{ G_n,f_n \}_{n=1}^{\infty}$ will be called $d$-\emph{proper}, if the graphs are $d$-regular and the preimages of both edges and vertices have the same cardinality. That is, for every $n$
	\begin{enumerate}
		\item the graph $G_n$ is $d$-regular,
        \item $|f_n^{-1}(u)|=\frac{|V(G_{n+1}|}{|V(G_n)|}$ for every $u \in V(G_n)$, and
		\item $|f^{-1}_n(u,v) \cap E(G_{n+1})|=\frac{|E(G_{n+1})|}{|E(G_n)|}$ for every $(u,v) \in E(G_n)$.
	\end{enumerate}
	
    Let $\mathcal{A}$ be the Borel $\sigma$-algebra on $V(\mathcal{G})$, then $\mathcal{A}$ is generated by cylinders of the form $\{ x: x_n \in S \}$, where $S \subseteq V(G_n)$ for some $n$. Set
	\[\lambda\big(\{ x: 
	x_n \in S \}\big):=\frac{|S|}{|V(G_n)|},\]
    this extends uniquely to $\mathcal{A}$.
 
    Similarly, consider the product $\sigma$-algebra $\mathcal{A}^2$ on $V(\mathcal{G})^2$. Define $\eta$ by \\ \[\eta\Big(\{ (x, y): (x_n, y_n) \in Q \}\Big):=\frac{|Q \cap E(G_n)|}{|E(G_n)|},\] for every $n$ and $Q \subseteq V(G_n)^2$, this uniquely extends to 
	$\mathcal{A}^2$. We collect the basic properties of the inverse limit of a $d$-proper sequence.

\begin{lemma} \label{basic} (Lemma 3.1 \cite{kun2021measurable})
	Let $\{ G_n,f_n \}_{n=1}^{\infty}$ be a $d$-proper sequence, and define $\mathcal{G}, \lambda$ and $\eta$ as above.  
	Then $\mathcal{G}$ is a $d$-regular graphing with the probability measure $\lambda$ on the set of vertices, and the induced probability measure $\eta$ on the set of edges.  Moreover, if $G_1$ is bipartite then $\mathcal{G}$ admits a clopen bipartition.
\end{lemma}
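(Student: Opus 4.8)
The plan is to build the vertex space, upgrade the two combinatorial countings to honest measures, and then read off regularity and the graphing property from the local structure. First realize $V(\mathcal{G})$ as the set of coherent sequences $(x_n)$ with $f_n(x_{n+1})=x_n$ inside the compact metrizable space $\prod_n V(G_n)$ (discrete factors); it is closed, hence itself compact metrizable, and its Borel $\sigma$-algebra $\mathcal{A}$ is generated by the cylinders $\{x:x_n\in S\}$. That $\lambda$ is well defined on cylinders is exactly condition~(2): $\{x:x_n\in S\}=\{x:x_{n+1}\in f_n^{-1}(S)\}$ and $|f_n^{-1}(S)|=|S|\,|V(G_{n+1})|/|V(G_n)|$. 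Since cylinders are clopen in a compact space, finite additivity on the generated algebra is automatically $\sigma$-additivity, so Carath\'eodory extends $\lambda$ to a probability measure on $\mathcal{A}$ (mass $1$ on taking $S=V(G_1)$). The identical argument works for $\eta$ on $\mathcal{A}^2$: for $Q\subseteq V(G_n)^2$ one has $\{(x,y):(x_n,y_n)\in Q\}=\{(x,y):(x_{n+1},y_{n+1})\in(f_n\times f_n)^{-1}(Q)\}$, and because $f_n$ is a graph homomorphism $(f_n\times f_n)^{-1}(Q)\cap E(G_{n+1})=(f_n\times f_n)^{-1}(Q\cap E(G_n))\cap E(G_{n+1})$, so condition~(3) gives consistency and $\eta$ extends to a probability measure.

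Next, $E(\mathcal{G})=\bigcap_n\{(x,y):(x_n,y_n)\in E(G_n)\}$ is a decreasing intersection -- decreasing because each $f_n$ is a homomorphism -- of cylinders of $\eta$-measure $1$, so $\eta$ is concentrated on $E(\mathcal{G})$; it is symmetric since each $E(G_n)$ is. Computing on cylinders and using $|E(G_n)|=d\,|V(G_n)|$ shows that both coordinate projections $\pi_1,\pi_2\colon E(\mathcal{G})\to V(\mathcal{G})$ push $\eta$ forward to $\lambda$. The heart of the matter is now $d$-regularity. For fixed $x\in V(\mathcal{G})$, since $f_n$ is a homomorphism with $f_n(x_{n+1})=x_n$ it restricts to a map between the ($d$-element) neighbourhoods $N_{G_{n+1}}(x_{n+1})\to N_{G_n}(x_n)$, and the set of $\mathcal{G}$-neighbours of $x$ is precisely the inverse limit of this tower; a counting argument from condition~(3) (together with condition~(2) and $d$-regularity of the $G_n$) shows that these restrictions are bijections, whence the inverse limit has exactly $d$ elements and $\mathcal{G}$ is $d$-regular. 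I expect this step -- checking that passing to the limit neither collapses nor splits neighbourhoods, i.e.\ that the preimage conditions really do force $d$-regularity and, with it, that $\eta$ is the edge measure -- to be the main obstacle; everything else is bookkeeping.

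Given $d$-regularity, comparing $\eta$ with $\frac1d\int_X\deg_S(x)\,d\lambda(x)$ on the cylinders $\{(x,y):(x_n,y_n)=(u,v)\}$, on each of which both assign mass $1/|E(G_n)|$, identifies $\eta$ with the edge measure. To see that $\mathcal{G}$ is a graphing, note that $E(\mathcal{G})$ is a Borel subset of $V(\mathcal{G})^2$ all of whose sections have size $d$, so by the Lusin--Novikov theorem it decomposes into countably many Borel partial matchings; each is the graph of a Borel partial involution, and the identity $(\pi_1)_*\eta=(\pi_2)_*\eta=\lambda$ together with symmetry of $\eta$ (equivalently, that $\eta$ is the edge measure) shows each such involution preserves $\lambda$, so $E(\mathcal{G})$ is a countable union of graphs of pmp partial maps, as required. (Alternatively, edge-colour each $G_n$ with a bounded number of colours and push the resulting partial matchings to the limit.) Finally, if $V(G_1)=A\sqcup B$ is a bipartition of $G_1$, then $\{x:x_1\in A\}$ and $\{x:x_1\in B\}$ are clopen (the first-coordinate map is continuous), partition $V(\mathcal{G})$, and since every edge of $\mathcal{G}$ maps to an edge of $G_1$, which runs between $A$ and $B$, this is a bipartition of $\mathcal{G}$.
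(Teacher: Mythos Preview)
The paper does not give a proof of this lemma; it simply cites Lemma~3.1 of \cite{kun2021measurable}. So there is no in-paper argument to compare against, and your sketch is in fact more detailed than anything the paper provides.

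Most of it is fine---the consistency of $\lambda$ and $\eta$ via conditions~(2) and~(3), countable additivity via compactness, and the clopen bipartition from the first coordinate. But your treatment of $d$-regularity, which you rightly single out as the crux, has a real gap. The assertion that a ``counting argument from condition~(3)'' forces each restriction $f_n|_{N(x_{n+1})}\colon N_{G_{n+1}}(x_{n+1})\to N_{G_n}(x_n)$ to be a bijection is false. Take $G_n=K_4$ on $\{1,2,3,4\}$ and $G_{n+1}$ the $3$-regular graph on $\{1a,1b,\dots,4a,4b\}$ with edges
\[
1a\,2a,\ 1a\,2b,\ 1a\,4a,\ 1b\,3a,\ 1b\,3b,\ 1b\,4b,\ 2a\,3a,\ 2a\,4b,\ 2b\,3b,\ 2b\,4a,\ 3a\,4a,\ 3b\,4b,
\]
with $f_n(ix)=i$. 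Every vertex and every directed edge of $K_4$ has exactly two preimages, so this step is $d$-proper in the paper's sense; yet the neighbours $2a,2b,4a$ of $1a$ map to $2,2,4$, so the neighbourhood map at $1a$ is not injective. The double-counting you have in mind only shows that each neighbour of $u$ is hit once \emph{on average} over the fibre $f_n^{-1}(u)$, not pointwise. Nor can you sidestep this by strengthening the hypothesis to covering maps: in the paper's own construction the extension of $p_0^n$ over the gadgets $\texttt{Gad}(m)$ is explicitly a non-covering homomorphism. Since your identification $\eta=\frac1d\int\deg_S\,d\lambda$ and the ``each involution is pmp'' step both rest on $d$-regularity, the gap propagates. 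A correct argument has to be genuinely measure-theoretic and aim only at almost-everywhere regularity---for instance, by controlling the $\lambda$-measure of the set of $x$ for which $f_n|_{N(x_{n+1})}$ fails to be bijective for infinitely many $n$; the pointwise bijectivity you assert is simply not available.
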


\section{Finite number of circulations} \label{sec:finitely_many_circ}

In this section, we prove our most general result, Theorem~\ref{thm:finitely_many_circ}. As the argument gets fairly technical, we first outline our strategy in~\ref{subsec:strategy} while in~\ref{subsec:glossary} we collect the parameters we will need to keep track of. We detail our construction  in~\ref{subsec:construction}, and in~\ref{subsec:no_other_circ} we finally prove that the treeing we build has no unwanted circulations. 

\subsection{Strategy} \label{subsec:strategy}

We construct $\mcT$ as an inverse limit of finite graphs that almost cover each other. We start with a finite $d$-regular graph $G_1$ and $\alpha^1_1, \ldots, \alpha^1_k$ pairwise orthogonal, rational valued circulations on $G_0$. At each step, we
aim to build the next graph $G_{n+1}$ in a way that the $\alpha^n_i$ lift to it, but the circulations $\beta \perp \langle \alpha^n_i:1\leq i \leq k \rangle $ do not. We achieve this by 
\begin{enumerate}
    \item identifying a finite, but sufficiently dense subset $\mathcal{B}_n$ of such $\beta$'s that we aim to eliminate;
    \item introducing separate coordinates for all $\beta \in \mathcal{B}_n$;
    \item building a topological cover of $G_n$ by adding coordinate vectors $\mathbf{x}=\big(x(\beta)\big)_{\beta \in \mathbf{B}_n}$ to vertices of $G_n$, where we connect  $(u,\mathbf{x})$ to $(v,\mathbf{y})$ if $(\mathbf{y}-\mathbf{x})(\beta) = \beta(u,v)$ for all $\beta \in \mathcal{B}_n$; (This  cover corresponds to the homomorphism of the fundamental group of $G_n$ into $\mathbb{Q}^{\mathcal{B}_n}$ induced by the set $\mathcal{B}_n$.)
    \item cutting sufficiently large boxes with small relative boundary out of this infinite cover by restricting the values of the coordinates to be in an interval;
    \item \label{step:surgery} adding a relatively small number of vertices and edges near the boundary of the boxes to maintain $d$-regularity. (Surgery step.)
\end{enumerate} 

We denote by $\alpha_i$ the limit of the sequence $\{ \alpha^n_i \}_{n=1}^{\infty}$. This exists, since we take care that the sequence stabilizes almost everywhere. We suppose for a contradiction that there is an $L^2$ circulation $\gamma$ on $\mcT$ with norm one that is orthogonal to all the $\alpha_i$. By the properties of our construction, we can find a $\beta$ at a finite level $G_n$ approximating $\gamma$ very well. Crucially, at the next level, projecting $G_{n+1}$ to the coordinate corresponding to $\beta$ defines a potential $h_{\beta}$ whose gradient coincides with $\beta$ on all but a small proportion of the edges, those added during the surgery. We can lift $h_{\beta}$ to a potential $h$ on $\mcT$ whose gradient is very close to $\gamma$. This leads to a contradiction, as, on one hand, $\langle \gamma , \nabla  h \rangle$ should be close to $\|\gamma\|_2=1$, while, on the other hand, by Lemma \ref{lem:potential_vs_circulation} it is close to zero, since it is the inner product of a circulation and the gradient of a potential.

A few remarks about this strategy are in order.

\begin{remark}
    At every stage, we get to choose the size of the box we want to cut out. We choose this to be sufficiently large, even compared to the parameters of the previous graph. Thus, all parameters designed to bound approximation errors will be arbitrarily small.
\end{remark}

\begin{remark}
It is important that we do not build topological covers at the finite stages, since that would lead to all circulations lifting. Even though the proportion of vertices added during the surgery is small, this part is responsible for separating the circulations that we keep from those we eliminate. 
\end{remark}

\begin{remark} 
We will make an effort to ensure that $G_1, G_2, \ldots$ is a proper sequence. This makes it necessary during Step \ref{step:surgery} of the strategy above to extend the homomorphism and also to construct the preimages of the edges such that they have the same size. This involves technical assumptions on $\mathcal{B}_n$, arguments to make $G_n$ connected and non-bipartite, and using gadgets to artificially increase preimages of certain edges. 

These considerations are not essential for the construction to work. Alternatively, we could work with an almost proper sequence and its inverse limit. In that case, however, keeping track of all approximations during the proof of Theorem \ref{thm:finitely_many_circ} would become quite cumbersome. We chose to address the combinatorial complications during the construction instead, and consequently make the computations later simpler.
\end{remark}

\begin{remark}
The novel ingredients of our strategy (compared to those introduced in \cite{kun2021measurable}) are 
\begin{itemize}
    \item working with $l^2$ edge spaces instead of using partial orientations to eliminate circulations;
    \item selecting the circulations to be eliminated by picking $\mathcal{B}_n$ from the orthogonal complement of the $\alpha_i^n$;
    \item establishing a Deficiency Lemma (Lemma~\ref{lemma:deficiency}) which will allow us to perform the surgery on the boundary mostly locally.
\end{itemize}
\end{remark}

\subsection{Glossary} \label{subsec:glossary}
Before we describe the construction in detail, we collect the parameters that will appear.

\begin{itemize}
    \item $\varepsilon >0$ will be the ultimate error of the approximation of the inner product $\langle \gamma, \nabla h\rangle$.
    \item $D$ will denote the common denominator of the entries of the  vectors $\alpha^n_i$.
    \item $M$ will bound the maximal absolute value of entries of the vectors $\alpha^n_i$.
    \item $M_n$ will bound the maximal value of the entries of vectors that we intend to eliminate at step $n$.
    \item $D_n$ will denote the common denominator needed at step $n$. 
    \item $\theta_n$ will bound the proportion of vertices added during the surgery at step $n$.
    \item $K_n$ will determine our choice of how large boxes we use when building the finite almost-covers. The side of the box will have length $K_n M_n$. We will choose $K_n$ large enough such that all approximations hold with little error.
\end{itemize}

\subsection{Construction} \label{subsec:construction}

\subsubsection{General setup}
Fix $\varepsilon > 0$ small enough to be specified later. We construct $\mcT$ as an inverse limit of finite graphs that almost cover each other. Let $G_1$ be a suitable finite connected non-bipartite $d$-regular graph such that there are $\alpha^1_1, \ldots, \alpha^1_k$ pairwise orthogonal, rational valued circulations on $G_1$ with
norm equal to one. Let $D$ denote the common denominator of their entries, $M = \max_i \|\alpha^1_i\|_{\infty}$, and $K_0=M_0=1$.

The circulations $\alpha_i^{n+1}$ that we construct will be almost orthogonal:

$$\forall \text{ } i \neq j \text{ } |\langle \alpha_i^{n+1}, \alpha_j^{n+1} \rangle | \leq \frac{1}{2k}-\frac{1}{2k+2n}.$$ 

This holds for $n=0$. Given $G_n$ and $\alpha^n_1, \ldots, \alpha^n_k$, we define $G_{n+1}$ as follows. The vertex set $V(G_{n+1})$ will consist of two disjoint sets, denoted by $V_0(G_{n+1})$ and $U(G_{n+1})$ such that $\frac{|U(G_{n+1})|}{|V(G_{n+1})|}$ will be less then a carefully chosen parameter $\theta_n$. We also choose an even $M_n \in \mathbb{N}$ that will bound the entries of vectors treated at this stage. 
Let $W_n=\langle \alpha^n_1, \ldots, \alpha^n_k\rangle^{\perp}$ in $\ell^2(E(G_n))$ and $\mathcal{B}_n$ be a finite $\frac{\varepsilon}{2}$-net of vectors with rational coefficients in $W_n \cap [-M_n, M_n]^{E(G_n)}$. 
To make the proof of connectivity of $G_{n+1}$ easier, we also need a technical condition on $\mathcal{B}_n$. We need that for every $x \in V(G_n)$ there exist two different $\beta \in \mathcal{B}_n$ such that $\beta$ takes both a positive and a negative value on some edge starting at $x$. We can choose such $\beta$'s even in the smaller subspace orthogonal to every circulation on $G_n$, that is, as gradients of potentials.

Denote the common denominator of the entries of the elements of $\mathcal{B}_n$ by $D_n$. We construct $V_0(G_{n+1})$ by adding coordinates to the vertices of $G_n$.  We introduce a coordinate for each $\beta \in \mathcal{B}_n$, and allow the  coordinates to take rational values from a large interval. We choose the length of the interval to be a multiple of $M_n$. Set

\[V_0(G_{n+1}) = V(G_n) \times \left([K_n \cdot M_n]_{d^2D_n}\right)^{\mathcal{B}_n},\] 
where the multiplier $K_n \in \N$ is chosen large enough, even compared to $|\mathcal{B}_n|$.

Let $p^n_0$$: V_0(G_{n+1}) \to V(G_n)$ and $p^n_{\beta}: V_0(G_{n+1}) \to [K_n \cdot M_n]_{d^2D_n}$ denote the projections to the coordinates. As pointed out in~\ref{subsec:strategy}, the role of these coordinates corresponding to $\mathcal{B}_n$ is to eliminate all circulations orthogonal to the $\alpha_i^n$ already on the next graph. We first introduce the set of edges $E_0(G_{n+1})$ on $V_0(G_{n+1})$. (The set of vertices $U(G_{n+1})$ and the edges incident to it will be constructed separately in Lemma~\ref{lemma:extension}.) Let $u,v \in V(G_n)$ and $\mathbf{x}, \mathbf{y} \in \left([K_n\cdot M_n]_{d^2D_n}\right)^{\mathcal{B}_n}$. Then $(u,\mathbf{x})$ and $(v, \mathbf{y})$ form an edge in $E_0(G_{n+1})$ if and only if $u$ and $v$ are neighbors in $G_n$ and $\mathbf{x}$ and $\mathbf{y}$ differ by the appropriate amount, namely $\beta(u,v)$, in every coordinate. That is, we require 

\begin{equation} \label{eqn:edge_lift}
(u,v) \in E(G_n) \textrm{, and } \ \mathbf{y}(\beta)- \mathbf{x}(\beta) = \beta(u,v) \quad \forall \beta \in \mathcal{B}_n.
\end{equation}

Note that $\big(V_0(G_{n+1}),E_0(G_{n+1})\big)$ is not connected, since for any two vertices in the same component the difference of the coordinates corresponding to the same projection $p^n_{\beta}$ is an integer multiple of $\frac{1}{D_n}$. Consider the induced subgraph on the set of vertices

$$\underline{V}_0(G_{n+1})=V(G_n) \times \left([K_n \cdot M_n]_{D_n}\right)^{\mathcal{B}_n}.$$

Denote its edge set by $\underline{E}_0(G_{n+1})$. The graph $\big(V_0(G_{n+1}), E_0(G_{n+1})\big)$ is the disjoint union of $d^{2|\mathcal{B}_n|}$ isomorphic copies of $\big(\underline{V}_0(G_{n+1}), \underline{E}_0(G_{n+1})\big)$, and the gradient of every $p^n_{\beta}$ is invariant under the isomorphisms between these copies. The purpose of the $d^2$ term in $d^2D_n$ is to yield many isomorphic copies. We will make use of this when we perform the same surgeries on these copies in order to construct $G_{n+1}$.

\subsubsection{Deficiency near the boundary}
\label{subsubsec:ingredients}

Our eventual aim is to construct $U(G_{n+1})$ and $\alpha^{n+1}_i$. We start by lifting the $\alpha^n_i$ to $E_0(G_{n+1})$ by setting $\alpha^{n+1}_i(a,b) = \alpha^n_i\big(p_0^n(a), p_0^n(b)\big)$, and analyzing where, and by what amount they fail to be circulations.
Partition $V_0(G_{n+1})$ into $M_n$-by-$M_n$ cubes according to the second coordinate. That is, for an integer-valued vector $\mathbf{m} \in [K_n]^{\mathcal{B}_n}$ let \[V_{\mathbf{m}} = \left\{(v, \mathbf{x}) \in V_0(G_{n+1}) ~\Bigg|~ M_n\cdot \big(\mathbf{m(\beta)}-1 \big) <  \mathbf{x}(\beta) \leq M_n\cdot \mathbf{m}(\beta), \ \forall \beta \in \mathcal{B}_n\right\}.\] 

First we consider $M_n$-by-$M_n$ cubes that are not on the boundary, i.e., we assume $\mathbf{m} \in \{2,\ldots,K_n-1\}^{\mathcal{B}_n}$. Let $(u, \mathbf{x}) \in V_{\mathbf{m}}$. Every $G_{n}$-edge $(u,v)$ has, at the vertex $(u, \mathbf{x})$, a unique lift $((u, \mathbf{x}),(v,\mathbf{y}))$. Indeed, all the coordinates of $\mathbf{y}$ are determined by equation (\ref{eqn:edge_lift}). As $M_n < \mathbf{x}(\beta) \leq M_n(K_n-1)$, and  $\beta(u,v) \in [-M_n,M_n]$ for all $\beta \in \mathcal{B}_n$, we have $0 < y(\beta) \leq K_nM_n$, so $(v,\mathbf{y})$ is indeed a vertex of $V_0(G_{n+1})$. In other words, the map $p_0^n$ is a local isomorphism at the vertices of $V_{\mathbf{m}}$. Consequently, such vertices have degree $d$, and all the $\alpha^{n+1}_i$ satisfy the flow condition at these vertices. 

This fails, however, near the boundary, i.e., when $\mathbf{m}$ has a coordinate equal to $1$ or $K_{n}$. We call such coordinates of $\mathbf{m}$ \emph{tight}. As a first step towards constructing $U(G_n)$ we show that for those $V_{\mathbf{m}}$ where $\mathbf{m}$ has only one tight coordinate, the total amount missing at vertices cancels out, so there is no obstruction to doing the surgery locally. More precisely, introduce the $\alpha^{n+1}_i$-\emph{deficiency} of a vertex as
\[\texttt{def}_{\alpha^{n+1}_i}(u,\mathbf{x}) = \sum_{\big((u, \mathbf{x}),(v,\mathbf{y})\big) \in E_0(G_{n+1})} \alpha^{n+1}_i \big((u,\mathbf{x}),(v, \mathbf{y})\big).\] The deficiency of a vertex expresses how much weight $\alpha^{n+1}_i$ we will need to assign in total to the extra edges that we will introduce at $(u,\mathbf{x})$ in order to make $\alpha^{n+1}_i$ a circulation. The deficiency of a vertex set $A$ is defined as the sum of the deficiencies of the vertices: \[\texttt{def}_{\alpha^{n+1}_i}(A) = \sum_{(u,\mathbf{x}) \in A} \texttt{def}_{\alpha^{n+1}_i}(u,\mathbf{x}).\]

\begin{lemma}[Deficiency lemma] \label{lemma:deficiency}
    If $\textbf{m}$ has one single tight coordinate, then $\texttt{def}_{\alpha^{n+1}_i}(V_{\mathbf{m}}) =0$ for all $i \in \{1,\ldots, k\}$. 
\end{lemma}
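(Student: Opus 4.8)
The plan is to unwind $\texttt{def}_{\alpha^{n+1}_i}(V_{\mathbf{m}})$ into a constant multiple of the inner product $\langle \alpha^{n}_i, \beta_0\rangle_{\ell^2(E(G_n))}$, where $\beta_0 \in \mathcal{B}_n$ is the single tight coordinate of $\mathbf{m}$; this inner product vanishes because $\beta_0 \in \mathcal{B}_n \subseteq W_n = \langle \alpha^n_1,\dots,\alpha^n_k\rangle^{\perp}$. Fix $i$ and abbreviate $\alpha := \alpha^n_i$, $\alpha^+ := \alpha^{n+1}_i$. By the symmetry between the two ends of the coordinate interval we may assume $\mathbf{m}(\beta_0)=1$; the case $\mathbf{m}(\beta_0)=K_n$ is handled identically, with the roles of the lower and upper ends swapped.

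First I would compute the deficiency of a single vertex $(u,\mathbf{x}) \in V_{\mathbf{m}}$. For a $G_n$-edge $(u,v)$ the (necessarily unique) lift at $(u,\mathbf{x})$ is present iff $\mathbf{x}(\gamma)+\gamma(u,v) \in [K_nM_n]_{d^2D_n}$ for all $\gamma \in \mathcal{B}_n$. In the non-tight directions $\gamma \neq \beta_0$ we have $\mathbf{m}(\gamma) \in \{2,\dots,K_n-1\}$, hence $\mathbf{x}(\gamma) \in (M_n, M_n(K_n-1)]$, and since $\gamma(u,v) \in [-M_n,M_n]$ this coordinate stays in range; so a lift can disappear only because of the $\beta_0$-coordinate, precisely when $\mathbf{x}(\beta_0)+\beta_0(u,v) \le 0$. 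Since $\alpha$ is a circulation on $G_n$ and $\alpha^+$ is the pullback of $\alpha$ along $p^n_0$, summing $\alpha^+$ over the surviving lifts at $(u,\mathbf{x})$ gives
\[
\texttt{def}_{\alpha^+}(u,\mathbf{x}) = -\!\!\sum_{\substack{v\,:\,(u,v)\in E(G_n)\\ \mathbf{x}(\beta_0)+\beta_0(u,v)\le 0}}\!\!\alpha(u,v).
\]

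Next I would sum over $V_{\mathbf{m}}$. Since $V_{\mathbf{m}} = V(G_n) \times \prod_{\gamma\in\mathcal{B}_n} I_\gamma$ is a product of coordinate intervals $I_\gamma \subseteq [K_nM_n]_{d^2D_n}$ and the formula above depends only on $u$ and $t := \mathbf{x}(\beta_0)$, the coordinates $\gamma\neq\beta_0$ contribute only a positive constant $N' = \prod_{\gamma\neq\beta_0}|I_\gamma|$, and after interchanging sums
\[
\texttt{def}_{\alpha^+}(V_{\mathbf{m}}) = -N'\sum_{(u,v)\in E(G_n)} \alpha(u,v)\,\big|\{\, t\in I_{\beta_0} : t \le -\beta_0(u,v)\,\}\big|, \qquad I_{\beta_0}=(0,M_n]\cap[K_nM_n]_{d^2D_n}.
\]
A term vanishes unless $\beta_0(u,v)<0$, in which case $-\beta_0(u,v)\in(0,M_n]$ is an integer multiple of $\tfrac1{D_n}$. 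The crucial observation is that the admissible values of $t$ run — along each connected component of $E_0(G_{n+1})$, which is all that $V_{\mathbf{m}}$ ever meets, since $E_0(G_{n+1})$ is a union of copies of $\underline{E}_0(G_{n+1})$ — through an arithmetic progression of mesh $\tfrac1{D_n}$, so the cardinality above equals $c\cdot(-\beta_0(u,v))$ \emph{exactly}, with $c>0$ independent of $(u,v)$. (This linearity is precisely what fixing $D_n$ to be the common denominator of $\mathcal{B}_n$ and threading it through the coordinate grid buys us; coarser bookkeeping would make it only approximate.) Hence
\[
\texttt{def}_{\alpha^+}(V_{\mathbf{m}}) = cN'\!\!\sum_{\substack{(u,v)\in E(G_n)\\ \beta_0(u,v)<0}}\!\!\alpha(u,v)\beta_0(u,v) = \tfrac{cN'}{2}\sum_{(u,v)\in E(G_n)}\alpha(u,v)\beta_0(u,v) = c'\,\langle \alpha, \beta_0\rangle_{\ell^2(E(G_n))},
\]
where the middle equality uses that $\alpha(u,v)\beta_0(u,v)$ is unchanged under reversing the orientation of an edge (a product of antisymmetric functions) and that edges with $\beta_0(u,v)=0$ contribute nothing. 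As $\beta_0\in\mathcal{B}_n\subseteq W_n$, the right-hand side is $0$, completing the proof.

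The only genuinely delicate point is the linearity of $|\{t\in I_{\beta_0} : t\le -\beta_0(u,v)\}|$ in $-\beta_0(u,v)$: it relies on the precise description of the coordinate set $[K_nM_n]_{d^2D_n}$ and on decomposing $V_{\mathbf{m}}$ over the connected components of $E_0(G_{n+1})$, along each of which the $\beta_0$-coordinate is confined to an arithmetic progression of step $\tfrac1{D_n}$. Everything else is routine bookkeeping; conceptually $\texttt{def}_{\alpha^+}(V_{\mathbf{m}})$ is just the net flux of $\alpha^+$ out of the box $V_{\mathbf{m}}$, which would be $0$ if the box sat in the bulk (where $\alpha^+$ is a genuine circulation), and, having exactly one tight face, instead evaluates to a multiple of $\langle\alpha,\beta_0\rangle$.
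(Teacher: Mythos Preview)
Your proof is correct and follows essentially the same route as the paper: compute the single-vertex deficiency as minus the sum of $\alpha$ over the missing lifts, sum over $V_{\mathbf m}$, interchange the order of summation, observe that the number of $\mathbf{x}$ for which the lift of $(u,v)$ is missing is a constant multiple of $|\beta_0(u,v)|$, and conclude via $\langle\alpha^n_i,\beta_0\rangle=0$. The only cosmetic differences are that the paper computes the constant explicitly as $d^2D_n(d^2D_nM_n)^{|\mathcal{B}_n|-1}$ rather than leaving it as $cN'$, and your detour through connected components of $E_0(G_{n+1})$ is unnecessary---the coordinate set, as used in the paper's own count, is already an arithmetic progression of step $1/(d^2D_n)$, so the linearity of $|\{t\in I_{\beta_0}:t\le-\beta_0(u,v)\}|$ in $-\beta_0(u,v)$ holds directly without any decomposition.
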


\begin{proof}
    Let $\beta$ stand for the tight coordinate of $\mathbf{m}$. Without loss of generality we can assume $\mathbf{m}(\beta)=1$, the proof is similar in the $\mathbf{m}(\beta)=K_{n}$ case. Since $\beta \in \mathcal{B}_n$, we have
    \[0 = \langle \alpha^n_i, \beta \rangle = \frac{1}{|E(G_n)|} \sum_{(u,v)\in E(G_n)} \alpha^n_i(u,v) \beta(u,v).\]

    As we argued before, the edge $(u,v) \in E(G_n)$ has no lift from $(u,\mathbf{x}) \in V_{\mathbf{m}}$ if and only if $\mathbf{x}(\beta) \leq \beta(u,v)$. Therefore,
    \[\texttt{def}_{\alpha^{n+1}_i}(u,\mathbf{x}) = \sum_{\big((u, \mathbf{x}),(v,\mathbf{y})\big) \in E_0(G_{n+1})} \alpha^{n+1}_i \big((u,\mathbf{x}),(v, \mathbf{y})\big)= \underbrace{\sum_{(u,v) \in E(G_n)}  \alpha^n_{i}(u,v)}_{=0} - \sum_{\substack{(u,v) \in E(G_n), \\ \mathbf{x}(\beta) \leq \beta(u,v)}} \alpha^n_{i}(u,v).\]
    
    We sum this over $V_{\mathbf{m}}$: \[\texttt{def}_{\alpha^{n+1}_i}(V_{\mathbf{m}}) = \sum_{(u,\mathbf{x}) \in V_{\mathbf{m}}} -\left( \sum_{(u,v) \in E(G_n), \ \mathbf{x}(\beta) \leq \beta(u,v)} \alpha^n_{i}(u,v) \right) = - \sum_{(u,v) \in E(G_n)} \sum_{\mathbf{x}:\mathbf{x}(\beta) \leq \beta(u,v)} \alpha^n_{i}(u,v).\]
    
    Note that the coordinates of $\mathbf{x}$ can take $d^2D_nM_n$ possible values. In the $\beta$ coordinate $d^2D_n\beta(u,v)$ choices result in $\mathbf{x}(\beta) \leq \beta(u,v)$, and we can choose all other coordinates arbitrarily. Consequently,
    
\begin{align*}
    \texttt{def}_{\alpha^{n+1}_i}(V_{\mathbf{m}}) &= -\sum_{(u,v) \in E(G_n)} d^2D_n\beta(u,v)(d^2D_nM_n)^{|\mathcal{B}_n|-1} \alpha^n_i(u,v) \\[6pt] &= -d^2D_n(d^2D_nM_n)^{|\mathcal{B}_n|-1} |E(G_n)| \langle\alpha^n_i,\beta\rangle =0.        
\end{align*}

\end{proof}

\begin{remark}
If $\mathbf{m}$ has a single tight coordinate then the graph $V_{\mathbf{m}}$ consists of $d^{2|\mathcal{B}_n|}$ isomorphic copies of the induced subgraph on $V_{\mathbf{m}} \cap \underline{V}_0(G_{n+1})$, and every $\alpha_i^{n+1}$ is invariant under the isomorphisms between these copies. The deficiency is zero for all of these copies.
\end{remark}

\subsubsection{Surgery and extension.} \label{subsubsec:surgery_and_extension}

The goal of this subsection is to prove the following lemma, that takes care of the construction of $G_{n+1}$.

\begin{lemma}[Extension lemma] \label{lemma:extension}
    If $K_n$ is large enough, we can add vertices $U(G_{n+1})$ and edges such that \begin{enumerate}
        \item \label{itm:proportion} the proportion of new vertices $\frac{|U(G_{n+1})|}{|V(G_{n+1})|}$ is less than $\theta_n$;
        
        \item \label{itm:circ} the $\alpha^{n+1}_i$ already defined on $E(V_0)$ can be extended to circulations with values bounded by $M$ and denominator dividing $D$ such that they remain almost orthogonal: 
        
        $\langle \alpha_i^{n+1}, \alpha_j^{n+1} \rangle \leq \frac{1}{2k}-\frac{1}{2k+2n}$ for $i \neq j$; 
        
        \item \label{itm:bounded_potential_gradient} for each $\beta \in \mathcal{B}_n$ the corresponding coordinate function $p^n_{\beta}$ can be extended to a potential $h_{\beta}$ on $V(G_{n+1})$ such that $\|h_{\beta_{n}}\|_{\infty}\leq K_n M_n$ and $\| \nabla h_{\beta} \|_{\infty} \leq M_n$;
        
        \item{$p_0^n$} \label{itm:proper} extends to a homomorphism $f_n: G_{n+1} \to G_n$ such that the preimage of every edge in $G_n$ has the same size.

        \item The graph $G_{n+1}$ is connected and non-bipartite.
    \end{enumerate}
\end{lemma}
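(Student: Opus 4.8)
The plan is to construct $U(G_{n+1})$ and the new edges by working locally on each boundary cube $V_{\mathbf{m}}$ (and uniformly across the $d^{2|\mathcal{B}_n|}$ isomorphic copies so that items \ref{itm:bounded_potential_gradient} and \ref{itm:proper} stay manageable), then performing a bit of global bookkeeping at the end to secure connectivity and non-bipartiteness. First I would stratify the cubes $V_{\mathbf{m}}$ by the number of tight coordinates of $\mathbf{m}$; the case of zero tight coordinates needs no surgery, and by the Deficiency Lemma (Lemma~\ref{lemma:deficiency}) the cubes with exactly one tight coordinate have total $\alpha^{n+1}_i$-deficiency zero for every $i$ simultaneously. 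For such a cube I would introduce a bounded number of auxiliary vertices (a ``patch'') together with edges that (a) complete every vertex of $V_{\mathbf{m}}$ that lost a lift back up to degree $d$, (b) route flow of value $\leq M$ and denominator dividing $D$ along the patch so that each $\alpha^{n+1}_i$ becomes a genuine circulation — this is possible precisely because the deficiency sums to zero, so we are solving a finite transportation problem on the patch — and (c) extend each coordinate function $p^n_\beta$ over the patch as a potential with $\|\nabla h_\beta\|_\infty \le M_n$, which is automatic on the non-tight coordinates (just copy a nearby value) and on the tight coordinate requires only that the patch be ``thin'' in that direction. Cubes with two or more tight coordinates form a negligible fraction — of order $|\mathcal{B}_n|^2 / K_n^2$ — so there I can afford to be wasteful: collapse or cap each such cube with a fixed finite gadget that kills all deficiencies crudely, absorbing the cost into $\theta_n$ by taking $K_n$ large.

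Next I would verify the five listed properties in turn. Item~\ref{itm:proportion}: the surgery touches only cubes with a tight coordinate, a $\le 2|\mathcal{B}_n|/K_n$ fraction of all cubes, and within each touched cube the patch has size bounded independently of $K_n$, so $|U(G_{n+1})|/|V(G_{n+1})| \to 0$ as $K_n\to\infty$; choose $K_n$ large enough to beat $\theta_n$. Item~\ref{itm:circ}: the extended $\alpha^{n+1}_i$ are circulations by construction on the patches and were already flows in the interior; the bound $M$ and denominator $D$ are maintained because the transportation solutions on a patch of fixed combinatorial type can be chosen from a fixed finite set of rational vectors depending only on the $\alpha^n_i$ (whose entries already have denominator $D$ and are bounded by $M$) — here one uses that the patch topology is fixed. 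Almost-orthogonality $\langle\alpha_i^{n+1},\alpha_j^{n+1}\rangle \le \tfrac{1}{2k}-\tfrac{1}{2k+2n}$: the inner product over $E_0(G_{n+1})$ equals $\langle\alpha_i^n,\alpha_j^n\rangle$ exactly (the lift is edge-measure preserving on that part), and the patch edges, being a $\le \theta_n$-fraction and carrying values $\le M$, shift the inner product by at most $O(M^2\theta_n)$, which is $\le \tfrac{1}{2k+2n-2}-\tfrac{1}{2k+2n}$ once $\theta_n$ is small enough — so set $\theta_n$ (and hence $K_n$) accordingly. Item~\ref{itm:bounded_potential_gradient}: $h_\beta$ is $p^n_\beta$ on $V_0(G_{n+1})$, extended patch-by-patch; in the interior $\nabla h_\beta$ equals $\beta$, on the patches we control the increments to stay $\le M_n$, and the uniform value on $[K_nM_n]_{d^2D_n}$ gives $\|h_\beta\|_\infty \le K_nM_n$. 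Item~\ref{itm:proper}: define $f_n$ to agree with $p_0^n$ on $V_0$ and send each patch vertex to the appropriate $G_n$-vertex; since we perform identical surgeries on all $d^{2|\mathcal{B}_n|}$ copies and (using the $d^2$ slack in $d^2D_n$) can choose the combinatorics so that every $G_n$-edge gets the same number of preimages, $f_n$ is a homomorphism with constant fibre size over edges — this is where the ``gadgets to artificially increase preimages of certain edges'' mentioned in the last remark of the strategy come in, inflating any under-represented edge's preimage to the common value.

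For the final clause — connectivity and non-bipartiteness — I would use the technical condition imposed on $\mathcal{B}_n$: for every $x \in V(G_n)$ there are two distinct $\beta \in \mathcal{B}_n$, chosen as gradients of potentials, each taking a positive and a negative value on edges at $x$. Connectivity of $(\underline V_0(G_{n+1}),\underline E_0(G_{n+1}))$ follows because $G_n$ is connected and, by this condition, from any vertex $(u,\mathbf{x})$ one can move each coordinate $p^n_\beta$ up or down by $\tfrac{1}{D_n}$ along suitable lifts of a closed walk at $u$, so the whole grid of coordinate-values in $[K_nM_n]_{D_n}^{\mathcal{B}_n}$ is reachable; the patches only add vertices attached to this connected part. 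Non-bipartiteness: since the $\beta$'s are gradients, $G_n$ being non-bipartite and $p_0^n$ a graph homomorphism already forces an odd closed walk to lift to an odd closed walk in $G_{n+1}$ (a cylinder over an odd cycle of $G_n$ closes up after passing through all coordinate values if needed, or simply lift an odd closed walk through a point where all coordinate increments around it cancel).

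I expect the main obstacle to be item~\ref{itm:circ} together with item~\ref{itm:proper}: simultaneously (i) patching the deficiencies of all $k$ circulations at once while keeping values in the fixed finite set $[-M,M]\cap\tfrac1D\Z$, and (ii) arranging the patch combinatorics so that edge-preimages are exactly equalized across $G_n$ without breaking (i) or inflating $\theta_n$. The Deficiency Lemma makes (i) solvable in principle for one-tight-coordinate cubes, but bundling it with the properness constraint — and handling the multi-tight cubes and the gadget insertions without disturbing the potentials $h_\beta$ — is the delicate combinatorial core of the argument.
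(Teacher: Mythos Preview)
Your overall architecture matches the paper's, but there are two genuine gaps where the plan as stated would fail.

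First, the treatment of cubes $V_{\mathbf{m}}$ with two or more tight coordinates. You propose to ``collapse or cap each such cube with a fixed finite gadget that kills all deficiencies crudely''. This cannot work: the Deficiency Lemma applies only to single-tight cubes, so for a multi-tight cube the total $\alpha^{n+1}_i$-deficiency is in general nonzero, and no purely local patch can make $\alpha^{n+1}_i$ a circulation there. One must therefore treat all multi-tight cubes together (their union $V_0^+$ does have total deficiency zero), which forces you to connect deficient vertices lying in cubes that may differ by order $K_nM_n$ in some coordinate $\beta$. If those connections ran through a gadget of bounded size, extending $h_\beta$ across the gadget would produce $\|\nabla h_\beta\|_\infty$ of order $K_n$, destroying item~\ref{itm:bounded_potential_gradient}. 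The paper resolves this by using \emph{long} gadgets, of length exceeding $K_n$, on which $h_\beta$ is extended linearly so the gradient stays $\le M_n$; the extra factor $K_n$ in gadget size is compensated by the $O(1/K_n^2)$ fraction of multi-tight cubes, so the total contribution to $|U(G_{n+1})|$ is still $O(1/K_n)\cdot|V_0(G_{n+1})|$.

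Second, your connectivity argument. You assert that $(\underline V_0(G_{n+1}),\underline E_0(G_{n+1}))$ is connected because from any vertex one can move each coordinate by $\pm 1/D_n$ along lifts of closed walks at $u$. But traversing a closed walk changes coordinate $\beta$ by $\sum_{e}\beta(e)$, which is some multiple of $1/D_n$ rather than $\pm 1/D_n$, and it changes all coordinates simultaneously; the technical condition on $\mathcal{B}_n$ does not give the independent control you need, and nothing guarantees that the subgroup of $(\tfrac{1}{D_n}\Z)^{\mathcal{B}_n}$ generated by cycle-sums is the whole lattice. In the paper connectivity is not inherited but \emph{manufactured}: boundary edges are first replaced by gadgets long enough to pass through a fixed $G_n$-edge many times, and then cross-surgeries swap endpoints of carefully matched gadget edges (same $f_n$-image, same $\alpha^{n+1}_i$-values, $h_\beta$-values differing by only $1/(d^2D_n)$) to merge components while preserving $f_n$, the circulations, and the gradient bound. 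Your non-bipartiteness sketch has the same defect --- an odd closed walk in $G_n$ need not lift to a closed walk --- and the paper instead builds an odd cycle deliberately during the one-tight surgery by matching two leaves of the same auxiliary tree to each other.
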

\begin{proof}

First, let us reattach all the edges on the boundary of $V_0(G_{n+1})$ that were cut because they leave the $K_n M_n$-by-$K_nM_n$ box, but now we add distinct endvertices for them. That is, for $e=vw \in E(G_n)$ and $\mathbf{x} \in ([K_n M_n]_{D_n})^{\mathcal{B}_n}$ such that $\mathbf{x}(\beta) + \beta(vw) \notin [K_n M_n]_{D_n}$ for at least one coordinate $\beta$, we add a vertex $u_{e,\mathbf{x}}$ and a $G_{n+1}$-edge $\tilde{e}$ between $(v,\mathbf{x})$ and $u_{e,\mathbf{x}}$, and define $\alpha^{n+1}_i(\tilde{e}) = \alpha^n_i(e)$. Now the flow condition is satisfied at all vertices in $V_0(G_{n+1})$, and all new vertices $u_{e,\mathbf{x}}$ have degree 1. We partition the newly introduced vertices according to the $M_n$-by-$M_n$ cube that their unique neighbor belongs to: $U_{0,\mathbf{m}}=\{u_{e,\mathbf{x}} \mid (v,\mathbf{x}) \in V_{\mathbf{m}}\}$.

Let us explain in this paragraph why almost orthogonality will follow from the construction.
We refer to the set of edges introduced here as the \emph{boundary} of $V_0(G_{n+1})$, denoted by $\partial(V_0)$. We partition $\partial(V_0)$ into $\partial^{+}(V_0)$ and $\partial^{-}(V_0)$ as follows. Fix any order on $\mathcal{B}_n$, and for each $\tilde{e}$ let $\beta_{\tilde{e}}$ denote the first coordinate $\beta$ such that $\mathbf{x}(\beta) + \beta(vw) \notin [K_n M_n]_{D_n}$. If $\mathbf{x}(\beta_{\tilde{e}}) + \beta_{\tilde{e}}(vw) \leq 0$, then $\tilde{e}$ belongs to $\partial^{-}(V_0)$, and if $\mathbf{x}(\beta_{\tilde{e}}) + \beta_{\tilde{e}}(vw) > K_n \cdot M_n$, then $\tilde{e}$ belongs to $\partial^{-}(V_0)$. The point is that the original edges $E(V_0)$ together with, say $\partial^{+}(V_0)$ provide a perfect cover of $G_n$, and the $\alpha^{n+1}_i$ are the lift of the $\alpha^{n}_i$ on these edges. So the sum of $\alpha^{n+1}_i\cdot \alpha^{n+1}_j$ over $E(V_0)\cup\partial^{+}(V_0)$ is close to zero. Thus, the almost orthogonality will be ensured as long as the proportion of newly added edges is small. 

The next two claims perform different surgeries on the boundary boxes. We work with boxes of the form $\underline{V}_{\mathbf{m}}=V_{\mathbf{m}} \cap \underline{V}_0({G_{n+1}})$ and their isomorphic copies. The surgery will add path-like gadgets in order to connect vertices with deficit to vertices with sufficit. First, we consider the case when $\mathbf{m}$ has one single tight coordinate. By the Deficiency Lemma \ref{lemma:deficiency} we can match vertices with deficit to those with sufficit within the same box, hence these paths do not need to be long. Hence, the number of additional vertices will be proportional to the total size of these boxes, that is, $O(1/K_n)|V_0(G_{n+1})|$. We will need to be more careful in the proof of Claim \ref{lemma:surgery2}, when we simultaneously treat all the boxes with at least two tight coordinates. We have to allow these paths to have length $O(K_n)$ in order to extend every $p_{\beta}^n$ to a potential with bounded gradient (i.e., not depending on $K_n$). This will be compensated by the fact that the proportion of these boxes is $O(1/K_n^2)$, so the number of added vertices is again $O(1/K_n)|V_0(G_{n+1})|$.

\begin{claim} \label{lemma:surgery}
If $\mathbf{m}$ has one single tight coordinate, then we can add a set of vertices $U_{\mathbf{m}}$ to $V_0({G_n+1})$ and a set of edges with at least one endvertex in $U_{\mathbf{m}}$ such that 

\begin{enumerate}

\item \label{egy}
$|U_{\mathbf{m}}| \leq c|\underline{V}_{\mathbf{m}}|$, where $c$ depends only on $d, k, DM,$ and $G_n$, but not on $K_n$,

\item 
the vertices of $U_{\mathbf{m}}$ are not adjacent to any vertex of 
$V_0(G_{n+1}) \setminus \underline{V}_{\mathbf{m}}$,

\item 
the resulting graph is $d$-regular at every vertex of $\underline{V}_{\mathbf{m}} \cup U_{\mathbf{m}}$,

\item \label{negy}
the $\alpha^{n+1}_i$ extend to $U_{\mathbf{m}}$ such that the flow condition holds at every vertex of $\underline{V}_{\mathbf{m}} \cup U_{\mathbf{m}}$,

\item \label{ot}
$|\alpha^{n+1}_i| \leq M$ on the set of edges spanned by $\underline{V}_{\mathbf{m}} \cup U_{\mathbf{m}}$,

\item
the homomorphism $p_0^n$ extends to a homomorphism $f_n$ on $U_{\mathbf{m}} \cup V_0(G_{n+1})$.

\end{enumerate} 
\end{claim}

\begin{proof}
We have already added $U_{0,\mathbf{m}}$. From each vertex $u\in U_{0,\mathbf{m}}$, build a $(d-1)$-ary tree of depth $\left\lceil \log_{d-1}\big(k(MD)^k +2 \big)\right\rceil$. At the root $u$, each $\alpha^{n+1}_{i}$ defines an inflow, which we will spread out over the tree. We define the $\alpha_i^{n+1}$ as we go down the tree, splitting the values, and eventually subdivide the  vector $(\alpha_1^{n+1}, \ldots, \alpha^{n+1}_k)$ into parts with at most one nonzero coordinate taking value $\pm \frac{1}{D}$ and multiple vertices with zero coordinates only. We do this for every $u$. By Lemma~\ref{lemma:deficiency}, the number of leaves with value $\frac{1}{D}$ and $-\frac{1}{D}$ in the $i$-th coordinate are the same, for every $i$. Therefore, we can choose a matching on the leaves of the trees connecting the $\frac{1}{D}$ values to the $-\frac{1}{D}$ values. We connect every matched pair of leaves by a gadget graph. The rest of the leaves can be matched since $M_n$ and hence the number of leaves is even. We only make sure that there are two leaves in the same tree matched to each other in order to create an odd cycle, hence the resulting graph will not be bipartite. For the natural number $m$, the gadget graph $\texttt{Gad}(m)$ will consist of $m$ consecutive copies of the complete bipartite graph $K_{d-1,d-1}$, joined by $d$ edges bijectively between the appropriate bipartition classes, and connected to two distinguished vertices at the end. Formally, we set $V(\texttt{Gad}(m))=\{u,v\} \cup [2m] \times [d-1]$, and define the set of edges as follows. 
\begin{align*}
    E(\texttt{Gad}(m))=&\bigg\{\big(u,(1,j) \big), \big( v,(2m,j) \big): j \in [d-1] \bigg\} \\
&\cup \bigg\{\big( (2i,j),(2i+1,j) \big): i \in [m-1], j \in [d-1] \bigg\}\\
 &\cup\bigg\{\big( (2i-1,j),(2i,j') \big): i \in [m], j, j' \in [d-1] \bigg\}.
 \end{align*}

For every matched pair $(a,b) $ we add a copy of $\texttt{Gad}(m(a,b))$ to the graph, where $m(a,b)\leq |V(G_n)|$ will be chosen later, and identify $a$ with $u$ and $b$ with $v$. These gadgets will be pairwise disjoint. We send the flow along a path from $a$ to $b$ in the gadget, and define it to be zero on the other edges. Now $\ref{egy}-\ref{ot}$  hold for this construction. 

It remains to extend $p_0^n$ to a homomorphism $f_n$. We can define $f_n$ arbitrarily on the trees rooted at the vertices of $U_{0,\mathbf{m}}$ as long as we map each child to a neighbor of the image of its ancestor. The gadget $\texttt{Gad}(m)$ has a homomorphism into the path of length $2m+1$ mapping $u$ and $v$ to the endvertices. Therefore, we can choose $m(a,b)$ for a matched pair $(a,b)$ such that there is a walk of length $2m(a,b)+1$ connecting $f_{n}(a)$ and $f_n(b)$. Such a path exists, since $G_n$ is connected and not bipartite.
\end{proof}

Now we treat all the other boxes on the boundary together. Let $V_0^+$ denote the union of the boxes $V_{\mathbf{m}}$ with at least two tight coordinates. Set $\underline{V}_0^+=V_0^+ \cap \underline{V}_0(G_{n+1})$.

\begin{claim}\label{lemma:surgery2}
We can add a set of vertices $U_0^+$ to $V_0(G_{n+1})$ and a set of edges with at least one endvertex in $U_0^+$ such that 

\begin{enumerate}

\item 
$|U_0^+| \leq c K_n |\underline{V}_0^+|$, where $c$ depends only on $d, k, DM, G_n$ and $\mathcal{B}_n$, but not on $K_n$,

\item 
the vertices of $U_0^+$ are not adjacent to any vertex of 
$V_0(G_{n+1}) \setminus \underline{V}_0^+$,

\item 
the resulting graph is $d$-regular at every vertex of $\underline{V}_0^+ \cup U_0^+$,

\item
the lift of each $\alpha^n_i$ extends to $U_0^+$ such that the flow condition holds at every vertex of  $\underline{V}_0^+ \cup U_0^+$,

\item
$|\alpha^{n+1}_i| \leq M$ on the set of edges spanned by $\underline{V}_0^+ \cup U_0^+$,

\item
the homomorphism $p_0^n$ extends to a homomorphism on  $\underline{V}_0^+ \cup U_0^+$,

\item \label{extra} for each $\beta \in \mathcal{B}_n$ the corresponding coordinate function $p^n_{\beta}$ can be extended to a potential $h_{\beta}$ on $\underline{V}_0^+ \cup U_0^+$ such that $\|h_{\beta_{n}}\|_{\infty}\leq K_n M_n$ and $\| \nabla h_{\beta} \|_{\infty} \leq M_n$.
\end{enumerate} 
\end{claim}

\begin{proof}
Note that the total deficiency of any $\alpha_i^{n+1}$ on $\underline{V}_0^+$ is again zero. The construction will be similar to that in the previous lemma.  The difference is that in order to satisfy \ref{extra}, we choose $m(a,b)>K_n$. 
This allows us to extend each $p^n_{\beta}$ to every gadget as a linear function of the distance from $a$ such that \ref{extra} holds, since the distance of $a$ and $b$ is at least $K_n$ inside the gadget.
The rest of the proof is the same. 
\end{proof}

We apply Claim \ref{lemma:surgery} to the boxes with exactly one tight coordinate $\mathbf{m}$, consisting of disjoint isomorphic copies of $\underline{V}_{\mathbf{m}}$, and Claim \ref{lemma:surgery2} to the union of the boxes with at least two tight coordinates, consisting of disjoint isomorphic copies of $\underline{V}_0^+$. These allow us to extend each $\alpha_i^{n+1}$ and the homomorphism $p_0^n$ from $V_0(G_{n+1})$, while each $p^n_{\beta}$ is extended to $h_\beta$ on $U_0^+$. We can define $h_\beta$ as a constant on $U_{\mathbf{m}}$ for every $\mathbf{m}$ with one single tight coordinate. 

{\bf Connectivity of $G_{n+1}$.} Now we apply some surgeries to make $G_{n+1}$ connected. After some preparation we will choose pairs of edges $(a,b)$ and $(a',b')$ in different components, whose removal does not disconnect their component, remove them and add the edges $(a,b')$ and $(a',b)$ in order to merge these components. Before making this surgery we will make sure that the circulations $\alpha_i^{n+1}$ take the same values on these edges, that $(a,b$) and $(a',b')$ are mapped to the same edge of $G_n$, and the values of the potentials $h_{\beta}$ almost agree at $a$ and $a'$, and also at $b$ and $b'$. The first two conditions enable us to maintain $\alpha_i^{n+1}$ and $f_n$ during the surgery, whereas the last one will guarantee that $\|\nabla h_{\beta}\|_{\infty} \leq M_n$ still holds. Hence, in preparation, we will replace certain edges $(x,y)$ by a gadget $\texttt{Gad}(m)$, i.e., we remove the edge $(x,y)$, add an isomorphic copy of $\texttt{Gad}(m)$, and connect $x$ to $u$ and $y$ to $v$. We fix an edge $e$ of $G_n$, and choose the size of the gadget $\texttt{Gad}(m)$ such that there is a walk of length $(2m+3)$ connecting $f_n(x)$ and $f_n(y)$ in $G_n$ going through $e$ at least $2|\mathcal{B}_n|$ times, and define the homomorphism $f_n$ on the gadget accordingly. We extend each $h_{\beta}$ to the gadget linearly, hence $\nabla h_{\beta}$ is at most $M_n/(2m+3)$ on the gadget edges. We do this replacement for every edge in $\partial V_0(G_{n+1})$. We can choose $2|\mathcal{B}_n|$ preimages of $e$ in each gadget introduced above, such that these preimages correspond to different steps of the walk on $G_n$. Removing these edges simultaneously does not disconnect the gadget. We associate two such edges in every gadget to each element of $\mathcal{B}_n$: one to increase that coordinate, and one to decrease it.   

Given two boundary edges whose endvertices differ by exactly $\frac{1}{d^2D_n}$ in exactly one coordinate $\beta \in \mathcal{B}_n$, we choose the edges $(a,b), (a',b')$ associated to $\beta$ from their respective gadgets, and perform the cross-surgery described above. (For one of the boundary edges we choose $(a,b)$ to be the gadget-edge that is meant to decrease the $\beta$ coordinate, and for the other edge, we choose $(a',b')$ to be the gadget-edge meant to increase it, depending on which edge had endvertices with higher $\beta$-coordinate.) The value of $h_{\beta}$ differs on $a$ and $a'$ (and also on $b$ and $b'$) by $\frac{1}{d^2D_n}$, so $\nabla h_{\beta}$ can increase by at most this amount on the edges, implying $\nabla h_{\beta} (a,b') \leq \frac{M_n}{2m+3} + \frac{1}{d^2D_n} \leq M_n$, and similarly for $(a',b)$.

Why is this graph connected? First, note that two vertices on the boundary of $V_0(G_{n+1})$ can be connected if they agree in their coordinate corresponding to $V(G_n)$ and have the same tight coordinate, since every coordinate can be changed by $\frac{1}{d^2D_n}$. (Note that the tight coordinate can also be changed, as long as the result of the change is still a vertex on the boundary of $V_0(G_{n+1})$.) Next, we use our technical assumption on $B_n$. Namely, for every $x \in V(G_n)$ there are at least two elements of $\mathcal{B}_n$ that are not zero on edges incident to $x$. Hence, the faces of the boundary corresponding to these coordinates contain vertices whose first coordinate is $x$, moreover, there is such a vertex which is contained in both faces. Consequently, vertices on the boundary of $V_0(G_{n+1})$, whose first coordinates agree are in the same component. 

Now consider two adjacent vertices $u,v \in V(G_n)$. We claim that there are two adjacent vertices $(u, \mathbf{x}),(v,\mathbf{y})$ on the boundary of $V_0(G_{n+1})$. Consequently, all the vertices on the boundary of $V_0(G_{n+1})$ are in the same connected component.   
To find such vertices $(u, \mathbf{x})$ and $(v,\mathbf{y})$, we use again our technical assumption on $\mathcal{B}_n$. For each of $u$ and $v$ there are two distinct elements of $\mathcal{B}_n$, where vertices with first coordinate $x$ and $y$, respectively, appear on both corresponding faces of the boundary. Hence, we can choose two distinct coordinates, $\beta$ and $\beta'$, that have this property for $u$ and $v$, respectively. We will choose $\mathbf{x}(\beta)$ and $\mathbf{y}(\beta')$ such that they respectively ensure that $(u, \mathbf{x})$ and $(v,\mathbf{y})$ are on the boundary of $V_0(G_{n+1})$. We have to be a bit careful: as we want $(u, \mathbf{x})$ and $(v,\mathbf{y})$ to be connected, choosing $\mathbf{x}(\beta)$ prescribes $\mathbf{y}(\beta)$ via (\ref{eqn:edge_lift}). We assumed that the two faces corresponding to $\beta$ both contain vertices with first coordinate $u$. That is, we can choose $\mathbf{x}(\beta)$ sufficiently close to either $0$ or $K_nM_n$ to be on the boundary, and one of these two choices, depending on the sign of $\beta(u,v)$, will result in $0 < \mathbf{y}(\beta) \leq K_nM_n$. We do the same when choosing $\mathbf{y}(\beta')$: we ensure that $(v,\mathbf{y})$ is on the boundary, and $0<\mathbf{x}(\beta')\leq K_nM_n$. We are free to choose the rest of the coordinates of $\mathbf{x}$ and $\mathbf{y}$ arbitrarily as long as their difference satisfies (\ref{eqn:edge_lift}) and all the values are in $[K_nM_n]_{d^2D_n}$.

Finally, we argue that all vertices are connected to the boundary. Indeed, given an arbitrary vertex $(u,\mathbf{x})$ of $V_0(G_{n+1})$, we choose a cycle in $G_n$ containing $u$ such that some $\beta \in \mathcal{B}_n$ does not sum to zero on the cycle. Iteratively following the lifts of this cycle will keep increasing or decreasing the $\beta$ coordinate, and we will eventually hit the boundary. This finishes the proof of the connectivity of $G_{n+1}$.

{\bf Making the sequence proper.} In order to guarantee that the preimage of every edge in $G_n$ has the same size, we iteratively replace edges of $G_{n+1}$ whose image has a smaller preimage than the maximum over $E(G_n)$ by the above gadget $\texttt{Gad}(1)$. We might map all the new edges of $\texttt{Gad}(1)$ to the same edge of $G_n$ as the old one. We increase the size of the preimage of an edge by $d^2$ each step. The size of every preimage is divisible by $d^2$, since we made the same surgery in every copy of $\underline{V}_0(G_{n+1})$, and cross-surgeries also preserve this property. Thus, \ref{itm:proper} will hold in the end of this process. And the size of the largest preimage has not changed, hence the number of vertices not in $V_0(G_{n+1})$ can become at most $|E(G_n)|$ times bigger during this procedure.
Every potential can be extended to these gadgets in order to satisfy \ref{itm:bounded_potential_gradient}.

{\bf Bounding the size of the boundary.} Note that $|V_0^+| \leq {|\mathcal{B}_n| \choose 2} \frac{4}{K_n^2}|V_0(G_{n+1})|$, while the union of the boxes with one single tight coordinate has size at most 
$|\mathcal{B}_n| \frac{2}{K_n}|V_0(G_{n+1})|$.
Consequently, the number of vertices added using Claims~\ref{lemma:surgery} and \ref{lemma:surgery2}, the cross-surgery, and the surgeries making the size of the preimages equal  is  $O(\frac{1}{K_n})|V_0(G_{n+1})|$. Hence \ref{itm:proportion} follows if $K_n$ is large enough.

{\bf Almost orthogonality.} We prove almost orthogonality by induction. The contribution of the original edges $E_0(G_{n+1})$ together with $\partial^+(V_0)$ is bounded by the induction. The circulations take value at most $M$ on the rest of the edges, whose proportion is bounded by $\theta_n$. So we will be able to make sure that the contribution of the latter is at most $\frac{1}{2k}-\frac{1}{2k+2n} - \big(\frac{1}{2k}-\frac{1}{2k+2n-2}\big)=\frac{1}{2(k+n)(k+n-1)}>0$. This holds if $\theta_n \leq \frac{1}{2(k+n)(k+n-1)M^2}$. 
\end{proof}

\subsubsection{$\mcT$ as an inverse limit}

We define $\mcT$ as the inverse limit of $\{ G_n \}_{n=1}^{\infty}$ equipped with the homomorphisms $f_n:V(G_{n+1}) \rightarrow V(G_n)$. Since this is a $d$-proper sequence
Lemma \ref{basic} shows that the limit is a $d$-regular graphing.

We denote the normalized edge measure by $\eta$. Observe that $\alpha^n_i(e)=\alpha^{n+1}_i(e)$ if both endvertices of $e$ are in $V_0(G_{n+1})$. As long as we choose $\theta_n$ to be small enough, we have $\sum_{n=1}^{\infty} \frac{|U(G_n)|}{V(G_n)}<\infty$, and the sequence $\alpha^n_i(e)$ stabilizes for a.e. $e \in E(\mcT)$ by the Borel-Cantelli lemma. Thus, $\alpha_i(e)=\lim_{n\to \infty}\alpha_i^{n}(e)$ are a.e.\ well-defined and are circulations on $\mathcal{T}$. 

\subsection{The elimination of the other circulations} \label{subsec:no_other_circ}

We will work in $L^2(E(\mcT), \eta)$.
We denote the $L^2$ norm by $\|*\|$ and the supremum norm by $\|*\|_{\infty}$. Given $m$ and a mapping $\varphi:E(G_m) \to \mathbb{R}$, it induces a map$\underline{\varphi}: E(\mcT) \to \mathbb{R}$ defined by $\underline{\varphi}(e)=\varphi(e_m)$. In what follows we omit the underline, e.g., by $\|\varphi\|$ we mean $\|\underline{\varphi}\|$, this will be clear from the context. The same applies to $\|*\|_{\infty}$, and also to $L^2(V(\mcT),\lambda)$ w.r.t.\ both norms. 

\begin{lemma} \label{lemma:alpha_norms}
The norms of the $\alpha_i$ and $\alpha^n_i$ are close to one: for every $i$

$\Pi_{j=1}^{n-1} (1-\theta_j) \leq \|\alpha_i^n\|^2 \leq 1+ \sum_{j=1}^{n-1} \theta_j M^2$, and

$\Pi_{j=1}^{\infty} (1-\theta_j) \leq \|\alpha_i\|^2 \leq 1 + \sum_{j=1}^{\infty} \theta_jM^2$.
\end{lemma}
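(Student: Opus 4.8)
The plan is to track how $\|\alpha_i^{n}\|^2$ changes as we pass from $G_n$ to $G_{n+1}$, using the structure of the extension. Recall that $E(G_{n+1})$ decomposes into the edges $E_0(G_{n+1})$ lying over the box $V_0(G_{n+1})$ (on which $\alpha_i^{n+1}$ is the pullback of $\alpha_i^n$), the boundary edges $\partial(V_0)$, and the edges added during the surgeries (trees, gadgets, cross-surgeries, and the properness gadgets), all of which lie in the part incident to $U(G_{n+1})$ and whose total $\eta$-measure is at most $\theta_n$ by part~\ref{itm:proportion} of Lemma~\ref{lemma:extension}. First I would observe that on $E_0(G_{n+1}) \cup \partial^+(V_0)$ the map $\alpha_i^{n+1}$ is a perfect lift of $\alpha_i^n$: this set of edges is a perfect cover of $E(G_n)$ in the sense that each edge of $G_n$ has exactly $|V_0(G_{n+1})|/|V(G_n)|$ preimages among them, so the normalized $\eta$-mass of $(\alpha_i^{n+1})^2$ restricted there equals (a fixed proportion times) $|E(G_n)| \cdot \|\alpha_i^n\|^2$. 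Concretely, writing $p = \eta\big(E_0(G_{n+1}) \cup \partial^+(V_0)\big)$, we get $\int_{E_0 \cup \partial^+} (\alpha_i^{n+1})^2 \, d\eta = p \cdot \|\alpha_i^n\|^2$, with $1 - \theta_n \leq p \leq 1$.

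Next I would bound the contribution of the remaining edges, i.e.\ $\partial^-(V_0)$ together with all surgery edges. Their total $\eta$-measure is at most $\theta_n$ (this is exactly the content of part~\ref{itm:proportion}), and on all of them part~\ref{itm:circ} of Lemma~\ref{lemma:extension} guarantees $|\alpha_i^{n+1}| \leq M$. Hence this contribution is a nonnegative number that is at most $\theta_n M^2$. Combining the two pieces gives the one-step estimate
\[
(1-\theta_n)\|\alpha_i^n\|^2 \;\leq\; \|\alpha_i^{n+1}\|^2 \;\leq\; \|\alpha_i^n\|^2 + \theta_n M^2 .
\]
Starting from $\|\alpha_i^1\| = 1$ and iterating, the lower bound telescopes multiplicatively to $\prod_{j=1}^{n-1}(1-\theta_j)$, and the upper bound telescopes additively (using $\|\alpha_i^n\|^2 \leq 1 + \sum_{j<n}\theta_j M^2$ at each stage, which in particular stays bounded) to $1 + \sum_{j=1}^{n-1}\theta_j M^2$. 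This proves the first pair of inequalities.

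For the statement about $\alpha_i$ itself, recall from the construction of $\mcT$ as an inverse limit that $\alpha_i(e) = \lim_{n\to\infty}\alpha_i^n(e)$ for $\eta$-a.e.\ $e$, that $|\alpha_i^n| \leq M$ uniformly, and that $\sum_n \theta_n < \infty$ so the products and sums above converge. The dominated convergence theorem (with dominating function the constant $M$, using that $\eta$ is a probability measure) then gives $\|\alpha_i\|^2 = \lim_{n\to\infty}\|\alpha_i^n\|^2$, and sandwiching this limit between the monotone bounds $\prod_{j=1}^{\infty}(1-\theta_j)$ and $1 + \sum_{j=1}^{\infty}\theta_j M^2$ finishes the proof. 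The only mildly delicate point is the bookkeeping in the first step — verifying that $E_0(G_{n+1}) \cup \partial^+(V_0)$ really is a perfect cover of $E(G_n)$ and that the surgery edges account for all of the remaining $\eta$-mass — but this is already spelled out in the "Almost orthogonality" paragraph of the proof of Lemma~\ref{lemma:extension}, so no new idea is needed.
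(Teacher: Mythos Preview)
Your proof is correct and follows essentially the same inductive strategy as the paper: establish the one-step estimate $(1-\theta_n)\|\alpha_i^n\|^2 \leq \|\alpha_i^{n+1}\|^2 \leq \|\alpha_i^n\|^2 + \theta_n M^2$ from the cover structure and the bound $|\alpha_i^{n+1}|\leq M$ on the small surgery part, then pass to the limit. The only cosmetic difference is that the paper splits $E(G_{n+1})$ according to whether the initial vertex of a directed edge lies in $V_0(G_{n+1})$ or $U(G_{n+1})$ (which is slightly cleaner, since it is insensitive to the later gadget replacements of $\partial(V_0)$), whereas you use the perfect cover $E_0(G_{n+1})\cup\partial^+(V_0)$ cited from the almost-orthogonality paragraph; both yield the same inequality.
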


\begin{proof} We prove the bounds by induction on $n$. Recall that $\|\alpha^1_i\|^2=1$.

Note that $(1-\theta_n) \|\alpha_i^n\|^2 \leq \frac{|V_0(G_{n+1})|}{|V(G_{n+1})|} \|\alpha_i^n\|^2 = \|\alpha_i^{n+1}|_{V_0(G_{n+1}) \times V(G_{n+1})}\|^2 \leq \|\alpha_i^{n+1}\|^2$. 

On the other hand, 
\begin{align*}
    \|\alpha_i^{n+1}\|^2 &= \|\alpha_i^{n+1}|_{V_0(G_{n+1}) \times V(G_{n+1})}\|^2 + \|\alpha_i^{n+1}|_{E(U(G_{n+1})) \times V(G_{n+1})}\|^2 \\[6pt] &\leq \frac{|V_0(G_{n+1})|}{|V(G_{n+1})|}\|\alpha_i^n\|^2 + \frac{|U(G_{n+1})|}{|V(G_{n+1})|} M^2 
\leq \|\alpha_i^n\|^2 + \theta_n M^2.
\end{align*}

Since $\alpha_i^n$ converges to $\alpha_i$ in $L^2(\mcT, \eta)$ the second part of the lemma follows as well.
\end{proof}

Now we prove our main theorem, namely that the $\alpha_i$'s span the vectorspace of all $L^2$ circulations of $\mcT$.

\begin{proof}[Proof of Theorem~\ref{thm:finitely_many_circ}] 
      If a graphing contains cycles on a set of vertices of positive measure, one can easily define infinitely many linearly independent circulations in $L^2$. Therefore, showing that the $\alpha_i$ generate all the circulations implies that $\mcT$ is a treeing.
      
      Suppose for a contradiction that $\gamma \in L^2\big(E(\mcT), \eta\big)$ is a circulation with $\gamma \perp \alpha_i$ for all $i$ and $\|\gamma\|=1$. As mentioned in section \ref{subsec:strategy}, we will build a potential $h$ on $\mcT$ such that $\langle\gamma, \nabla h\rangle$ will be close to $1$, which contradicts the fact that it should be zero by Lemma~\ref{lem:potential_vs_circulation}. We illustrate the argument in Figure~\ref{fig:picture_of proof}. 

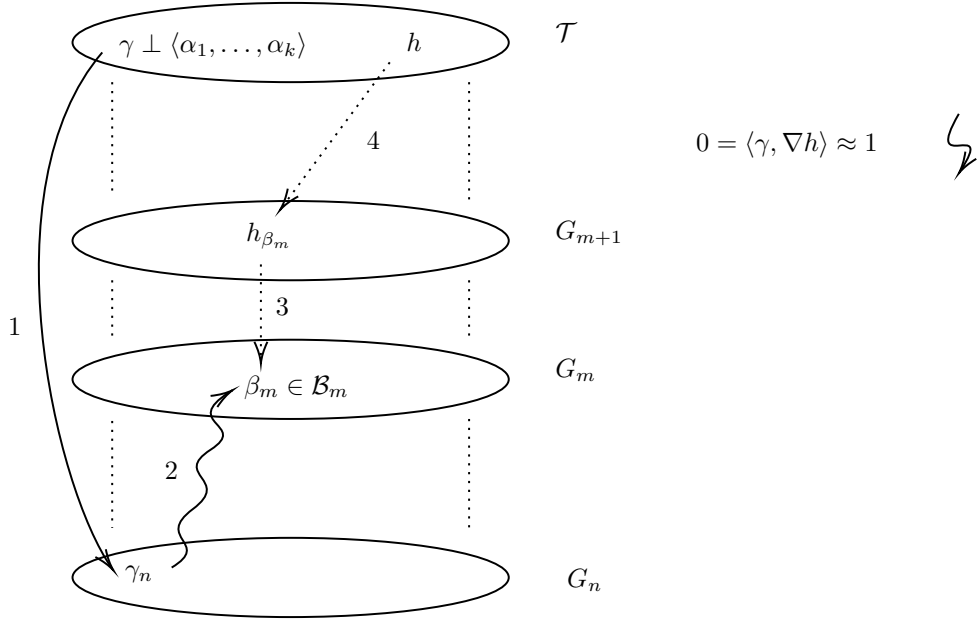
\begin{figure}
    \centering

\tikzset{every picture/.style={line width=0.75pt}} 

\begin{tikzpicture}[x=0.75pt,y=0.75pt,yscale=-1,xscale=1]

\draw   (80,70) .. controls (80,58.95) and (129.25,50) .. (190,50) .. controls (250.75,50) and (300,58.95) .. (300,70) .. controls (300,81.05) and (250.75,90) .. (190,90) .. controls (129.25,90) and (80,81.05) .. (80,70) -- cycle ;
\draw [line width=0.75]  [dash pattern={on 0.84pt off 2.51pt}]  (100,90) -- (100,145) ;
\draw [line width=0.75]  [dash pattern={on 0.84pt off 2.51pt}]  (280,90) -- (280,150) ;
\draw [line width=0.75]  [dash pattern={on 0.84pt off 2.51pt}]  (100,190) -- (100,220) ;
\draw [line width=0.75]  [dash pattern={on 0.84pt off 2.51pt}]  (280,190) -- (280,220) ;
\draw    (95,75) .. controls (36.26,143.81) and (70.96,294.45) .. (99.79,335.3) ;
\draw [shift={(100.67,336.5)}, rotate = 233.37] [color={rgb, 255:red, 0; green, 0; blue, 0 }  ][line width=0.75]    (10.93,-3.29) .. controls (6.95,-1.4) and (3.31,-0.3) .. (0,0) .. controls (3.31,0.3) and (6.95,1.4) .. (10.93,3.29)   ;
\draw  [dash pattern={on 0.84pt off 2.51pt}]  (175,182) -- (175,230) ;
\draw [shift={(175,232)}, rotate = 270] [color={rgb, 255:red, 0; green, 0; blue, 0 }  ][line width=0.75]    (10.93,-3.29) .. controls (6.95,-1.4) and (3.31,-0.3) .. (0,0) .. controls (3.31,0.3) and (6.95,1.4) .. (10.93,3.29)   ;
\draw  [dash pattern={on 0.84pt off 2.51pt}]  (240,80) -- (185.86,152.89) ;
\draw [shift={(184.67,154.5)}, rotate = 306.6] [color={rgb, 255:red, 0; green, 0; blue, 0 }  ][line width=0.75]    (10.93,-3.29) .. controls (6.95,-1.4) and (3.31,-0.3) .. (0,0) .. controls (3.31,0.3) and (6.95,1.4) .. (10.93,3.29)   ;
\draw [line width=0.75]  [dash pattern={on 0.84pt off 2.51pt}]  (100,261) -- (100,315) ;
\draw [line width=0.75]  [dash pattern={on 0.84pt off 2.51pt}]  (280,260) -- (280,315) ;
\draw    (527,106) .. controls (508.29,138.34) and (545.84,106.81) .. (527.86,134.68) ;
\draw [shift={(527,136)}, rotate = 303.54] [color={rgb, 255:red, 0; green, 0; blue, 0 }  ][line width=0.75]    (10.93,-3.29) .. controls (6.95,-1.4) and (3.31,-0.3) .. (0,0) .. controls (3.31,0.3) and (6.95,1.4) .. (10.93,3.29)   ;
\draw   (80,170) .. controls (80,158.95) and (129.25,150) .. (190,150) .. controls (250.75,150) and (300,158.95) .. (300,170) .. controls (300,181.05) and (250.75,190) .. (190,190) .. controls (129.25,190) and (80,181.05) .. (80,170) -- cycle ;
\draw   (80,240) .. controls (80,228.95) and (129.25,220) .. (190,220) .. controls (250.75,220) and (300,228.95) .. (300,240) .. controls (300,251.05) and (250.75,260) .. (190,260) .. controls (129.25,260) and (80,251.05) .. (80,240) -- cycle ;
\draw   (80,340) .. controls (80,328.95) and (129.25,320) .. (190,320) .. controls (250.75,320) and (300,328.95) .. (300,340) .. controls (300,351.05) and (250.75,360) .. (190,360) .. controls (129.25,360) and (80,351.05) .. (80,340) -- cycle ;
\draw    (130,335) .. controls (150.67,322.5) and (120.67,320.5) .. (140.67,305.5) .. controls (160.67,290.5) and (129.67,288.5) .. (149.67,275.5) .. controls (169.27,262.76) and (134.12,262.51) .. (159.06,246.5) ;
\draw [shift={(160.67,245.5)}, rotate = 148.74] [color={rgb, 255:red, 0; green, 0; blue, 0 }  ][line width=0.75]    (10.93,-3.29) .. controls (6.95,-1.4) and (3.31,-0.3) .. (0,0) .. controls (3.31,0.3) and (6.95,1.4) .. (10.93,3.29)   ;

\draw (322,58.4) node [anchor=north west][inner sep=0.75pt]    {$\mathcal{T}$};
\draw (322,158.4) node [anchor=north west][inner sep=0.75pt]    {$G_{m+1}$};
\draw (322,228.4) node [anchor=north west][inner sep=0.75pt]    {$G_{m}$};
\draw (102,63.4) node [anchor=north west][inner sep=0.75pt]    {$\gamma \perp \langle \alpha _{1} ,\dotsc ,\alpha _{k} \rangle $};
\draw (105,332.4) node [anchor=north west][inner sep=0.75pt]    {$\gamma _{n}$};
\draw (165,236.4) node [anchor=north west][inner sep=0.75pt]    {$\beta _{m} \in \mathcal{B}_{m}$};
\draw (166,159.4) node [anchor=north west][inner sep=0.75pt]    {$h_{\beta _{m}}$};
\draw (247,63.4) node [anchor=north west][inner sep=0.75pt]    {$h$};
\draw (393,113.4) node [anchor=north west][inner sep=0.75pt]    {$0=\langle \gamma ,\nabla h\rangle \approx 1$};
\draw (46,207.4) node [anchor=north west][inner sep=0.75pt]    {$1$};
\draw (125,280.4) node [anchor=north west][inner sep=0.75pt]    {$2$};
\draw (181,197.4) node [anchor=north west][inner sep=0.75pt]    {$3$};
\draw (227,113.4) node [anchor=north west][inner sep=0.75pt]    {$4$};
\draw (328,334.4) node [anchor=north west][inner sep=0.75pt]    {$G_{n}$};

\end{tikzpicture}

    \caption{ Outline of the proof of Theorem~\ref{thm:finitely_many_circ}}
    \label{fig:picture_of proof}
\end{figure}

First, we approximate $\gamma$ by a function $\gamma_n$ (its conditional expected value) w.r.t. the finite $\sigma$-algebra generated by the edges of $G_n$:
\[\gamma_n(e) = |E(G_n)| \int_{\tilde{e}_n = e_n} \gamma(\tilde{e}) \ d \eta(\tilde{e}).\] 
The sequence $\gamma_n$ converges to $\gamma$ a.e.\ and, since the norms admit a uniform bound, in $L_2(E(\mcT), \eta)$ as well.
Thus, if $n$ is large enough then $\|\gamma- \gamma_n \| < \varepsilon$ and $1-\varepsilon \leq \|\gamma_n\| \leq 1$. We get the following lemma similarly.

\begin{lemma}[Finite approximations $\gamma_n$ are almost orthogonal to the $\alpha^n_i$] \label{lemma:almost_orthogonal}
    If $n$ is large enough, then $|\langle \gamma_n, \alpha^n_i \rangle | \leq \frac{\varepsilon}{4k}$.
\end{lemma}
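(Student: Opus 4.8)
\textbf{Proof plan for Lemma~\ref{lemma:almost_orthogonal}.}
The plan is to compare $\langle \gamma_n, \alpha^n_i\rangle$ with $\langle \gamma, \alpha_i \rangle$, which is zero by hypothesis, and show that the difference is small because both $\gamma_n \to \gamma$ and $\alpha^n_i \to \alpha_i$ converge in $L^2(E(\mcT),\eta)$. First I would observe that since $\gamma_n$ is $\sigma(E(G_n))$-measurable and $\alpha^n_i$ is the pullback of a function on $E(G_n)$, the inner product $\langle \gamma_n, \alpha^n_i\rangle$ can be computed at the finite level $G_n$; moreover, because conditional expectation is self-adjoint and $\alpha^n_i$ is already $\sigma(E(G_n))$-measurable, we have $\langle \gamma_n, \alpha^n_i\rangle = \langle \gamma, \alpha^n_i\rangle$. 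So it suffices to bound $|\langle \gamma, \alpha^n_i\rangle|$.

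Next I would write
\[
\langle \gamma, \alpha^n_i\rangle = \langle \gamma, \alpha_i\rangle + \langle \gamma, \alpha^n_i - \alpha_i\rangle = \langle \gamma, \alpha^n_i - \alpha_i\rangle,
\]
using $\gamma \perp \alpha_i$. By Cauchy--Schwarz this is at most $\|\gamma\|\cdot\|\alpha^n_i - \alpha_i\| = \|\alpha^n_i - \alpha_i\|$ since $\|\gamma\| = 1$. Now recall from the construction (Section~\ref{subsec:construction}, and the argument following the definition of $\mcT$ as an inverse limit) that $\alpha^n_i(e)$ agrees with $\alpha_i(e)$ on every edge $e$ both of whose endvertices stay in $V_0$ at all later stages, and the $\alpha^{n+1}_i$ are uniformly bounded by $M$. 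Combined with $\sum_j \theta_j < \infty$, the Borel--Cantelli argument already used there gives $\alpha^n_i \to \alpha_i$ a.e., and domination by the constant $M$ upgrades this to $L^2$ convergence. Hence for $n$ large enough $\|\alpha^n_i - \alpha_i\| \leq \varepsilon/4k$, which yields the claim. (Quantitatively one can bound $\|\alpha^n_i - \alpha_i\|^2 \leq 4M^2 \sum_{j \geq n} \theta_j$, which is $\leq \varepsilon^2/16k^2$ once $n$ is large.)

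The only subtlety worth spelling out is the identity $\langle \gamma_n, \alpha^n_i\rangle = \langle \gamma, \alpha^n_i\rangle$: it is exactly the statement that $\gamma_n = \mathbb{E}[\gamma \mid \sigma(E(G_n))]$ and that $\alpha^n_i$ is $\sigma(E(G_n))$-measurable, together with the defining property $\mathbb{E}[\mathbb{E}[\gamma\mid\mathcal{F}]\cdot Z] = \mathbb{E}[\gamma \cdot Z]$ for $\mathcal{F}$-measurable $Z$. I do not expect any genuine obstacle here; the lemma is a short convergence estimate and the main work has already been done in establishing that the $\alpha^n_i$ stabilize. One should only take care that the $L^2$ spaces are all taken with respect to $\eta$ and that the normalizations of the finite inner products (dividing by $|E(G_n)|$) match the normalization of $\eta$, which they do by the definition of $\eta$ on the inverse limit.
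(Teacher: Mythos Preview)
Your proposal is correct and follows essentially the same approach as the paper: both rely on the $L^2$ convergence $\alpha^n_i \to \alpha_i$ (and $\gamma_n \to \gamma$) to conclude $\langle \gamma_n, \alpha^n_i\rangle \to \langle \gamma, \alpha_i\rangle = 0$. Your use of the conditional expectation identity $\langle \gamma_n, \alpha^n_i\rangle = \langle \gamma, \alpha^n_i\rangle$ is a nice shortcut that lets you bypass the $\gamma_n \to \gamma$ half of the argument, but the paper's two-line proof simply invokes both convergences at once to the same effect.
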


\begin{proof}
The sequence $\gamma_n$ converges to $\gamma$ in $L^2$, while $\alpha^n_i$ converges to $\alpha_i$ in $L^2$. Thus, $\lim_{n \to \infty} \langle \gamma_n, \alpha^n_i \rangle = \langle \gamma, \alpha_i \rangle=0$, and the lemma follows.   
\end{proof}

A priori $\gamma_n$ might have very large entries. Nevertheless, by possibly passing to some $m>n$ large enough we can assume that $\|\gamma_n\|_{\infty} < M_m - \frac{\varepsilon}{2} M$  
and $|\langle \gamma_n, \alpha^m_i \rangle | \leq \frac{\varepsilon}{4k}$ still hold for every $i$. By definition,  $\gamma_n$ assigns values to edges of $\mathcal{T}$. Yet it is constant on preimages of edges of $E(G_n)$, so we can think of it also as a function on $E(G_n)$, and consequently also on $E(G_m)$. We believe these interpretations are clear from context, so we will not burden the notation to indicate them.

\begin{lemma}
    There exists $\beta_m \in \mathcal{B}_m$ such that $\|\beta_m - \gamma_n\| <\varepsilon$.
\end{lemma}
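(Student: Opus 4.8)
The goal is to show that the finite net $\mathcal{B}_m$ chosen inside $W_m \cap [-M_m, M_m]^{E(G_m)}$ captures $\gamma_n$ up to $\varepsilon$. The plan is to verify that $\gamma_n$ (viewed as a function on $E(G_m)$) lies close to the subspace $W_m = \langle \alpha^m_1, \ldots, \alpha^m_k \rangle^\perp$, and that it has controlled sup-norm, so that its rational approximant inside the box is within $\varepsilon$-net distance of a genuine element of $\mathcal{B}_m$.

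First I would record the two properties of $\gamma_n$ guaranteed by the preceding paragraph: $\|\gamma_n\|_\infty < M_m - \tfrac{\varepsilon}{2}M$ and $|\langle \gamma_n, \alpha^m_i\rangle| \le \tfrac{\varepsilon}{4k}$ for every $i$. The second says $\gamma_n$ is almost orthogonal to each $\alpha^m_i$; since the $\alpha^m_i$ are almost orthonormal (norms close to $1$ by Lemma~\ref{lemma:alpha_norms}, pairwise inner products at most $\tfrac{1}{2k}$ by almost orthogonality), the orthogonal projection $\pi$ of $\gamma_n$ onto $W_m^\perp = \langle \alpha^m_1, \ldots, \alpha^m_k\rangle$ is small in $L^2$. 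Concretely, writing $\pi = \sum_i c_i \alpha^m_i$, the coefficients $c_i$ solve a linear system whose matrix is a small perturbation of the identity, with right-hand side entries $\langle \gamma_n, \alpha^m_i\rangle$ of size $\le \tfrac{\varepsilon}{4k}$; hence $\|\pi\| = O(\varepsilon)$, and by choosing the implied constants (i.e. taking $\varepsilon$ small and $n,m$ large) we get $\|\gamma_n - \pi\| < \tfrac{\varepsilon}{4}$, say, where $\gamma_n - \pi \in W_m$.

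Next I would approximate $\gamma_n - \pi$ by a rational-coefficient vector $\beta'$ in $W_m \cap [-M_m, M_m]^{E(G_m)}$. Since $\|\gamma_n\|_\infty < M_m - \tfrac{\varepsilon}{2}M$ and $\|\pi\|_\infty$ is small (it is a small combination of the $\alpha^m_i$, each bounded by $M$, so $\|\pi\|_\infty \le \tfrac{\varepsilon}{2}M$ after shrinking constants), the vector $\gamma_n - \pi$ lies well inside the open box $(-M_m, M_m)^{E(G_m)}$; as $W_m$ is a rational subspace (being the orthogonal complement of rational vectors), rational points are dense in it, so we can pick $\beta' \in W_m$ with rational coefficients and $\|\gamma_n - \pi - \beta'\|$ arbitrarily small, still inside the box. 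Then $\beta'$ is $\tfrac{\varepsilon}{2}$-close to some $\beta_m \in \mathcal{B}_m$ by the defining property of the $\tfrac{\varepsilon}{2}$-net. Assembling via the triangle inequality, $\|\beta_m - \gamma_n\| \le \|\beta_m - \beta'\| + \|\beta' - (\gamma_n - \pi)\| + \|\pi\| < \tfrac{\varepsilon}{2} + (\text{tiny}) + \tfrac{\varepsilon}{4} < \varepsilon$.

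The main obstacle is bookkeeping the quantifier order: $\mathcal{B}_m$ was fixed as an $\tfrac{\varepsilon}{2}$-net of the box in $W_m$ once $m$ was chosen, while $\gamma_n$ depends on $n$; the device (already used in the excerpt) is that we are free to pass to a larger $m$ so that $\|\gamma_n\|_\infty < M_m - \tfrac{\varepsilon}{2}M$ and the near-orthogonality to the $\alpha^m_i$ persist. One must also be slightly careful that the constant hidden in "$\|\gamma_n - \pi\| = O(\varepsilon)$" really is absorbed — this uses the almost-orthonormality bound on the Gram matrix of the $\alpha^m_i$, which is uniform in $m$ — and that rational density in $W_m$ genuinely holds, which follows because the defining equations of $W_m$ have rational coefficients. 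None of these steps is deep; the point is purely to chain the approximations in the right order.
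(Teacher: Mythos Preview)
Your approach is the paper's: subtract from $\gamma_n$ its small component $\pi$ along $\langle\alpha_1^m,\dots,\alpha_k^m\rangle$, check via the sup-norm estimate that $\gamma_n-\pi$ lands in $W_m\cap[-M_m,M_m]^{E(G_m)}$, and invoke the $\tfrac{\varepsilon}{2}$-net property of $\mathcal{B}_m$. Two small corrections: the inequality ``$\|\gamma_n-\pi\|<\tfrac{\varepsilon}{4}$'' is a slip (you mean $\|\pi\|$ is small; $\gamma_n-\pi$ has norm near $1$), and the intermediate rational approximant $\beta'$ is superfluous, since $\gamma_n-\pi$ already lies in the set where $\mathcal{B}_m$ is a net, so you can pick $\beta_m$ directly as the paper does. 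Finally, you cannot ``take $\varepsilon$ small'' here --- $\varepsilon$ was fixed before the construction began --- but this is harmless: the Gram-matrix computation (entries of $A^{-1}-I$ bounded by $1/k$, so $|c_i|\le \tfrac{\varepsilon}{2k}$) already gives $\|\pi\|\le\tfrac{\varepsilon}{2}$ and $\|\pi\|_\infty\le\tfrac{\varepsilon}{2}M$ without any further tuning.
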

\begin{proof}

We denote by $\gamma^{\perp}_n$ the component of $\gamma_n$ perpendicular to all the $\alpha^m_i$ on $G_m$. As $\gamma_n$ is almost orthogonal to the $\alpha^m_i$ by Lemma~\ref{lemma:almost_orthogonal}, $\gamma^{\perp}_n$ is close to $\gamma_n$. Formally, consider the matrix $A \in \mathbb{R}^{k \times k}$ with entries $A_{i,j}=\langle \alpha_i, \alpha_j \rangle$, the vector $b \in \mathbb{R}^k$ defined as $b_i=\langle \gamma_n,\alpha_i^m \rangle$ and the solution $x \in \mathbb{R}^k$ of $Ax=b$.

\begin{equation} \label{eqn:gamma_prime}
\gamma^{\perp}_n=\gamma_n-\sum_{i=1}^k x_i \alpha_i^m
\end{equation}

We use that the norms of the $\alpha_i^m$ are almost one by Lemma~\ref{lemma:alpha_norms} if $\theta_n$ are small enough: $1 - \frac{1}{2k} < \|\alpha_i\|^2 < 1 + \frac{1}{2k}$, while $|\langle \alpha_i, \alpha_j \rangle | \leq \frac{1}{2k}$ for every $i \neq j$. Thus, $A=I+A_0$, where $I$ is the identity matrix and every entry of the matrix $A_0$ has absolute value at most $\frac{1}{2k}$, and strictly less than $\frac{1}{2k}$ on the diagonal.
Therefore, $A^{-1}=\sum_{\ell=0}^{\infty}(-A_0)^{-\ell}$, since the sequence is absolutely convergent, and, in absolute value, every entry of $A_0^{\ell}$ is at most $\frac{1}{k2^{\ell}}$, and every entry of $A^{-1}-I$ is at most $\frac{1}{k}$. By Lemma \ref{lemma:almost_orthogonal} every coordinate of $b$ is at most $\frac{\varepsilon}{4k}$, hence the coordinates of $x=A^{-1}b$ are at most $\frac{\varepsilon}{2k}$. 
This shows that the entries of $\gamma^{\perp}_n$ are not much larger in absolute value than those of $\gamma_n$.

    \[\|\gamma^{\perp}_n\|_{\infty} \leq \|\gamma_n| |_{\infty} + \sum_{i=1}^k \frac{\varepsilon}{2k} \|\alpha_i^m\|_{\infty} \leq \|\gamma_n \|_{\infty} + \frac{\varepsilon}{2} \cdot \max_{i} \| \alpha_i^m \|_{\infty} \leq \|\gamma_n \|_{\infty} + \frac{\varepsilon}{2} M \leq  M_m.\]

Thus, $\gamma_n^{\perp} \in \langle\alpha_i^m,\ldots, \alpha_k^m\rangle^{\perp} \cap [-M_m, M_m]^{E(G_m)}$. As $\mathcal{B}_m$ is an $\frac{\varepsilon}{2}$-net, there is a $\beta_m \in \mathcal{B}_m$ such that $\|\gamma^{\perp}_n - \beta_m\| < \frac{\varepsilon}{2}$, and we get the bound in the statement of the lemma by the triangle inequality. 
\end{proof}

Clearly, $|\|\beta_m\|-1| < 2\varepsilon$. According to {\it (\ref{itm:bounded_potential_gradient})} of Lemma~\ref{lemma:extension}, there is an extension of the coordinate function $p^{m}_{\beta_m}$ to a potential $h_{\beta_{m}}$ on $V(G_{m+1})$ such that $\|h_{\beta_{m}}\|_{\infty} \leq K_m M_m$ and $\| \nabla h_{\beta_{m}} \|_{\infty} \leq M_m$. Now $h:V(\mcT) \to \mathbb{R}$ defined by $h(x)=h_{\beta_m}(x_m)$ satisfies $\|h\|_{\infty} \leq K_m M_m$ and $\| \nabla h \|_{\infty} \leq M_m$. Note that on directed edges $e$ such that both endvertices of $e_{m+1}$ are in $V_0(G_{m+1})$ we have $\nabla h(e) = \beta_m(e_m)$, therefore, this holds for at least $1-2d\theta_m$ proportion of the edges in the preimage of every $e_m \in E(G_m)$. 
Consequently, if $\theta_m<\frac{2\varepsilon^2}{d(1-2\varepsilon)^2}$ then

\begin{equation}
\label{eqn:beta_n_inner_prod} \langle \beta_m, \nabla h \rangle \geq (1-2d\theta_m) \|\beta_m\|^2 > (1-2d\theta_m)(1-2\varepsilon)^2 > 1-4\varepsilon.
\end{equation} 

\begin{lemma}[$\nabla h$ has norm almost one.] \label{lemma:nabla_h_norm}
If $\theta_m<\frac{\varepsilon^2}{M_m^2}$ then $\|\nabla h\| < 1+3\varepsilon$.
\end{lemma}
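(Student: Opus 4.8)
The plan is to estimate $\|\nabla h\|^2$ by splitting the edge set of $\mathcal{T}$ into the "bulk" edges — those $e$ for which both endvertices of $e_{m+1}$ lie in $V_0(G_{m+1})$ — and the remaining "boundary" edges coming from the surgeries. On the bulk edges we have the exact identity $\nabla h(e) = \beta_m(e_m)$, so the contribution of these edges to $\|\nabla h\|^2$ is at most $\|\beta_m\|^2$, which we already know is within $2\varepsilon$ of $1$ (indeed $|\|\beta_m\|-1|<2\varepsilon$, so $\|\beta_m\|^2 < (1+2\varepsilon)^2 < 1 + 5\varepsilon$ for small $\varepsilon$, and one can absorb this comfortably). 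The point is that the bulk already accounts for almost all of the mass.

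Next I would bound the boundary contribution. By {\it (\ref{itm:bounded_potential_gradient})} of Lemma~\ref{lemma:extension} we have the uniform bound $\|\nabla h\|_\infty \leq M_m$, and by {\it (\ref{itm:proportion})} of the same lemma the proportion of edges incident to $U(G_{m+1})$ is at most $\theta_m$ (more precisely, at most $2d\theta_m$ when measured in $\eta$, since each new vertex has degree $d$ and this is the worst case — the same factor that appears just before \eqref{eqn:beta_n_inner_prod}). Hence the boundary edges contribute at most $2d\theta_m M_m^2$ to $\|\nabla h\|^2$. Actually, to match the hypothesis $\theta_m < \varepsilon^2/M_m^2$ as stated, one should track the constant carefully: the non-bulk edges have $\eta$-measure bounded by a constant multiple of $\theta_m$, and on them $|\nabla h|\le M_m$, so their contribution is $O(\theta_m M_m^2) = O(\varepsilon^2)$, which is $o(\varepsilon)$ and in particular far below $\varepsilon$.

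Combining the two pieces: $\|\nabla h\|^2 \leq \|\beta_m\|^2 + O(\theta_m M_m^2) < (1+2\varepsilon)^2 + \varepsilon^2 < (1+3\varepsilon)^2$ once $\varepsilon$ is small enough, whence $\|\nabla h\| < 1+3\varepsilon$. I do not expect any genuine obstacle here — this lemma is purely bookkeeping. The only mild subtlety is getting the constants to line up with the precise hypothesis $\theta_m < \varepsilon^2/M_m^2$: one must be slightly careful whether the edge-proportion bound $\theta_m$ from Lemma~\ref{lemma:extension} refers to vertices or to $\eta$-measure of edges, and pick up the correct factor (roughly $2d$, as elsewhere in the argument). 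Since all the $\theta_j$ are chosen at the very end to be as small as we like relative to everything that came before, any such constant is harmless, and the stated bound follows.
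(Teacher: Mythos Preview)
Your approach is correct and essentially identical to the paper's: both split into bulk edges (where $\nabla h = \beta_m$, contributing at most $\|\beta_m\|^2$) and boundary edges (bounded via $\|\nabla h\|_\infty \leq M_m$ and $\eta$-measure $O(\theta_m)$). The paper's one-line proof writes the same estimate directly on the norm as $\|\nabla h\| < \|\beta_m\| + \sqrt{\theta_m}\,M_m < 1 + 3\varepsilon$; your remark about the possible $2d$ factor is in fact more careful than the paper's own bookkeeping, and as you note it is harmless since $\theta_m$ is chosen last.
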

\begin{proof} Since the sequence 
is proper, $\|\nabla h\| < \|\nabla h_{\beta_m}|_{V_0(G_{m+1})}\|+\sqrt{\theta_m} M_m < \|\beta_m\|+\varepsilon < 1+3\varepsilon.$
\end{proof}

We are now ready to show that $\langle\gamma, \nabla h\rangle$ is close to one. We approximate $\gamma$ with $\beta_m$.

\begin{equation} \label{eqn:inner_product_parts}
    \langle \gamma, \nabla h \rangle = \langle \beta_m, \nabla h \rangle + \langle \gamma_n - \beta_m, \nabla h \rangle + \langle \gamma - \gamma_n, \nabla h \rangle.
\end{equation}
We analyze the terms on the right-hand side of equation~\ref{eqn:inner_product_parts}. 

First, using~\ref{eqn:beta_n_inner_prod} we have
\[\langle \beta_m, \nabla h \rangle > 1- 4\varepsilon.\]

Next, \[|\langle \gamma_n - \beta_m, \nabla h \rangle| \leq  \|\gamma_n-\beta_m \| \cdot \|\nabla h\| < \varepsilon (1+3\varepsilon).\]

Finally, 
\[|\langle \gamma - \gamma_n, \nabla h \rangle| \leq \|\gamma-\gamma_n \| \cdot \|\nabla h\| < \varepsilon(1+3\varepsilon).\]

Plugging these three bounds into \ref{eqn:inner_product_parts} we get
\[\langle \gamma, \nabla h \rangle > (1-4\varepsilon)- \varepsilon(1+3\varepsilon)-\varepsilon(1+3\varepsilon)=1-6\varepsilon -6\varepsilon^2>0,\] a contradiction if 
$\varepsilon < \frac{\sqrt{15}-3}{6}$. 
This completes the proof of Theorem \ref{thm:finitely_many_circ}.
\end{proof}

\section{Applications} \label{sec:applications}

In this section, we prove the three applications of our method mentioned in the introduction. We only highlight where one needs to adjust the construction in each case.

\subsection{Balanced orientation} \label{subsec:balanced_orientation}

\begin{proof}[Proof of Theorem~\ref{thm:balanced_orientation}]
In this case, we have $k=1$, and we need to make sure that each $\alpha^n$ is a balanced orientation. So we start with $G_0$ a $2d$-regular finite graph and $\alpha^0$ a balanced orientation. Our task is to perform the surgery and extension steps such that $\alpha^{n+1}$ becomes a balanced orientation of $G_{n+1}$. 

This is less work than in the general case. 
The deficiency lemma tells us that on each $V_\mathbf{m}$ with at most one tight coordinate the total number of missing in-degrees and out-degrees is the same. So we could introduce additional edges within these $V_\mathbf{m}$ together with their $\alpha^{n+1}$ values such that we get a balanced orientation. However, in order to extend the homomorphism $p_0^n$, we replace every additional edge by $\texttt{Gad}(m)$, where $m$ depends only on the image of the endvertices under $p_0^n$.

Note that the used gadget $\texttt{Gad}(m)$ admits a balanced orientation everywhere, but at the vertices $u$ and $v$ with degree one: one of them will have an incoming edge, while the other will have an outgoing edge. Thus, if we replace an edge by $\texttt{Gad}(m)$ (i.e., we remove the edge $(x,y)$, add an isomorphic copy of $\texttt{Gad}(m)$ and connect $x$ to $u$ and $y$ to $v$) in a graph with a balanced orientation then it extends to a balanced orientation of the resulting graph. 

We also work collectively with the union of the remaining $V_{\mathbf{m}}$ with more than one tight coordinate denoted by $V_0^+$. This is again less work, since we do not need to attach trees to the edges on the boundary. The rest of the construction is the same. The inverse limit $\mcT$ admits a measurable balanced orientation, since the balanced orientations for the finite $\sigma$-algebras stabilize at a.e. vertex of $\mcT$, but $\mcT$ admits no other balanced orientations, moreover no other circulations in $L^2(E(\mcT),\eta)$.
\end{proof}

\subsection{Schreier graphing}
\label{subsec:Shreier_graphing}
\begin{proof}[Proof of Theorem~\ref{thm:Schreier_graphing}]
    In this case $k=d$, and we need to ensure that each $\alpha^n_i$ defines the $\mathbb{Z}$-action corresponding to the $i^{th}$ generator of $\mathbb{F}_d$. We start with a $2d$-regular graph $G_0$ and a Schreier decoration $\alpha^1_i$. Our task is to perform the surgery and extension steps such that the $\alpha^{n+1}_i$ become a Schreier decoration of $G_{n+1}$.

    Note that the used gadget $\texttt{Gad}(m)$ admits a Schreier decoration everywhere but at the vertices $u$ and $v$ with degree one: one of them will have an incoming edge, while the other will have an outgoing edge with the same label. Thus, if we replace an edge by $\texttt{Gad}(m)$ in a graph with a Schreier decoration, then it extends to a Schreier decoration of the resulting graph. The rest of the construction is the same as in the proof of Theorem \ref{thm:finitely_many_circ}.
    
    The inverse limit $\mcT$ admits a measurable Schreier decoration, since the Schreier decorations for the finite $\sigma$-algebras stabilize for a.e. vertex of $\mcT$, and $\mcT$ admits no other Schreier decorations and no other circulations in $L^2(E(\mcT),\eta)$, but the linear combinations of those corresponding to the generators.
    \end{proof}

\subsection{Proper edge coloring}
\label{subsec:edge_coloring}
\begin{proof}[Proof of Theorem~\ref{thm:perfect_matching}]

In this case $k=d$, and we construct $\mcT$ such that it is measurably bipartite, and admits a proper edge coloring $\texttt{col}:E( \mcT) \to [d]$. We write $B,W$ for the two classes of the bipartition (black, white). Each color in $[d]$ defines a circulation: for $(u,v)\in E(\mcT)$ with $u \in B$, $v \in W$ let 
\begin{equation} \label{eqn:circulation_from_coloring}
    \alpha_i(u,v)=
    \left\{
        \begin{tabular}{cc}
            $1$ & \textrm{ if } $\texttt{col}(u,v)=i$ \\
            $-1/(d-1)$ & \textrm{ if } $\texttt{col}(u,v) \neq i$  
        \end{tabular}
    \right. 
\end{equation}

Similary, any other perfect matching $M$ would define a circulation, so we aim to construct $\mcT$ such that the $\alpha_i$ above span the space of circulations on $\mcT$. We start with a connected $d$-regular bipartite graph $G_1$ with bipartition $(B_1, W_1)$ of the vertex set, and a proper edge coloring $\texttt{col}_1:E(G_1) \to [d]$. At each stage we work both with the coloring $\texttt{col}_n$ and the corresponding circulations $\alpha^n_i$ defined analogously to equation~\ref{eqn:circulation_from_coloring}. We construct a sequence of bipartite graphs (unlike in the proof of the other theorems), hence the limit will have a clopen bipartition. We will make sure that at each step we define $\texttt{col}_{n+1}$ to be a proper edge coloring of $G_{n+1}$. This will also define $\alpha^{n+1}_{i}$, and we will have $\langle \alpha_i^{n}, \alpha_j^{n} \rangle = -1/(d-1)^2$ for all $n$ and $i \neq j$.

The bipartition $(B_n, W_n)$ of $V(G_n)$ lifts trivially to $V_0(G_{n+1})$, let us denote it by $(B_{n+1}, W_{n+1})$. (When we add vertices during the surgery, we will indicate which color class they get assigned to.)  By the deficiency lemma we have $ \texttt{def}_{\alpha^{n+1}_i}(V_{\mathbf{m}}) =0$ for each $\mathbf{m}$ with one tight coordinate. Fix such an $\mathbf{m}$, and let $b_i$ denote the number of edges of color $i$ missing at vertices of $V_{\mathbf{m}} \cap B_{n+1}$, and $w_i$ denote the number of edges of color $i$ missing at vertices of $V_{\mathbf{m}} \cap W_{n+1}$. We have
\[0= \texttt{def}_{\alpha^{n+1}_i}(V_{\mathbf{m}}) = -b_i + w_i + \sum_{j \neq i} \frac{b_j}{d-1}-\frac{w_j}{d-1}.\]
Introducing $z_i=w_i-b_i$ we get 
\[0= z_i - \frac{1}{d-1}\sum_{j\neq i}z_j, \ \forall i\in \{1,\ldots,d\}.\]
That is, $z_1=z_2=\ldots=z_d = z$. This means that the excess number of missing edges from white vertices is the same for all colors. (This might be negative, meaning that the true excess is at black vertices.) Hence we can perform the surgery at $V_{\mathbf{m}}$ by first adding as many edges as possible among the vertices of $V_{\mathbf{m}}$, and then adding $z$ vertices (to $B_{n+1}$ if $z>0$, and to $W_{n+1}$ if $z <0$) to create pairs, in each edge-color-class, for those vertices of $V_{\mathbf{m}}$ that still need them. The value of $z$ depends on $\mathbf{m}$, but $|z| \leq |V_{\mathbf{m}}|$, so we are not adding too many vertices. Next, replace every additional edge by a gadget $\texttt{Gad}(m)$ for a well-chosen $m$ to guarantee that the homomorphism $p_0^n$ can be extended to the gadget. 
In the general theorem, we assumed the $G_n$ specifically \emph{not} to be bipartite. We did this to ensure that there is a path of odd length between the images of two vertices that we want to connect via a gadget. Here, on the other hand, $G_n$ is bipartite. However, this causes no problem, as we always insert gadgets between vertices that are mapped to opposite color classes, so their distance in $G_n$ is odd.

We argue similarly for the union of the remaining $V_{\mathbf{m}}$, those with at least two tight coordinates. We can introduce new vertices and edges such that the coloring is correct, but the edges might be too long. Therefore, we replace them by gadgets $\texttt{Gad}(m)$ to be able to extend the homomorphism $p_0^n$ to them, and choose $m > K_n$ in order to extend each potential $h_{\beta}$ such that its gradient remains bounded by $M_n$. The edge-coloring and the vertex-coloring can both be extended to the gadget $\texttt{Gad}(m)$. The same is true when we perform cross-surgeries to ensure connectivity.

The inverse limit $\mcT$ admits a measurable edge coloring, since the edge colorings for the finite $\sigma$-algebras stabilize for a.e. vertex of $\mcT$, and $\mcT$ admits no other measurable perfect matchings in $L^2(E(\mcT))$, but those corresponding to the generators.
\end{proof}

\section{Further directions}\label{sec:further_directions}

\paragraph{Perfect matchings in actions of $\mathbb{F}_d$.} It is natural to ask what the minimum number of circulations or perfect matching is on a Schreier graphing of a free pmp action of a finitely generated groups with acyclic Cayley graph, namely $\mathbb{F}_d$ or $(\Z / 2\Z)^{*d}$. Three out of the four possible questions are now resolved. The first author proved that $(\Z / 2\Z)^{*d}$ admits a free pmp action with no nonzero circulation \cite{kun2021measurable}. Our theorems show that $(\Z / 2\Z)^{*d}$ admits a free pmp action with exactly $d$ measurable perfect matchings, while $\mathbb{F}_d$ admits a free pmp action such that the subspace of measurable circulations in $L^2(E(\mcT),\eta)$ is exactly $d$ dimensional. That is, the possible minimum can be realized in these three cases.

This leads naturally to the following question asked by Thornton. 

\begin{question}[Thornton]
Is there an essentially free pmp action of $\mathbb{F}_d$ such that its Schreier graphing has no measurable perfect matchings?
\end{question}

Note that the Schreier graphing of such an action can not be measurably bipartite: if it was it would already have $2d$ perfect matchings corresponding to the generators and their inverses. Therefore, one cannot simply reduce the problem to circulations and apply our techniques. We expect a positive answer, but this seems to require new ideas.

\paragraph{Groups with minimum circulations.} Consider an arbitrary finitely generated group $\Gamma$. How small can the subspace of circulations in $L^2(E(\mathcal{G}),\eta)$ of the Schreier graphing $\mathcal{G}$ of a free pmp action of $\Gamma$ be?
We know that the cycle space and the circulations corresponding to the generators of infinite order are in it, and in the case of $\mathbb{F}_d$ and $(\Z / 2\Z)^{*d}$, we can construct an example where these generate all circulations. This tempts us to pose the following question.

\begin{question}
Which groups admit a finite set of generators and a free pmp action such that for the Schreier graphing $\mathcal{G}$ every circulation in $L^2(E(\mathcal{G}),\eta)$ is in the closure of the subspace generated by the cycles and the circulations corresponding to the generators?
\end{question}

The higher dimensional, homology theoretic analogues of the question regarding "complexings" also seem interesting.

\paragraph{Markov spaces.} The techniques of this paper work in a more general context. One tempting possibility, already showcased in \cite{kun2021measurable}, is to consider the inverse limit of graphs with unbounded degree in order to get so-called Markov spaces introduced in \cite{lovasz2021flows} as limit objects. (Markov spaces, generalizing graphings and graphons, admit a compatible vertex measure and edge measure, while circulations become signed measures in this context.) This looks to be a promising research direction, as one can  control the inverse limit in this case. One would hope that a good notion of rank to measure the (possibly fractional) dimension of the circulation space, in proportion to the edge or vertex space, could lead to interesting theorems.

\medskip

Recently, Bernshteyn \cite{bernshteyn2023probabilistic} and Brandt, Chang, Grebík, Grunau, Rozho\v{n}, and Vidny\'anszky \cite{brandt2021local} have discovered fascinating connections of measurable (Borel/Baire/continuous) combinatorics to local algorithms. It would be interesting to see applications of our methods in this context.

\bibliographystyle{alpha}  
\bibliography{references}

\bigskip
\noindent
{\bf Gábor Kun}\\
HUN-REN Alfréd Rényi Institute of Mathematics, Budapest, Hungary\\
and Eötvös Loránd University, Budapest, Hungary\\
\texttt{kun.gabor@renyi.hu}
\medskip
\ \\
{\bf László Márton Tóth}\\
HUN-REN Alfréd Rényi Institute of Mathematics, Budapest, Hungary\\
\texttt{toth.laszlo.marton@renyi.hu}

\end{document}